\newtheorem{theorem}{Theorem}[section]
\newtheorem{conjecture}[theorem]{Conjecture}
\newtheorem{lemma}[theorem]{Lemma}
\newtheorem{corollary}[theorem]{Corollary}
\newproof{proof}{Proof}
\numberwithin{equation}{section}
\newcommand{\C}{\mathbb{C}}
\newcommand{\F}{\mathbb{F}}
\newcommand{\N}{\mathbb{N}}
\newcommand{\R}{\mathbb{R}}
\newcommand{\eps}{\varepsilon}
\newcommand{\var}{\operatorname{Var}_p}
\begin{document}
\begin{frontmatter}
\title{Multiplication is an open bilinear mapping in the Banach algebra\\
of functions of bounded Wiener $p$-variation}

\author[TC]{Tiago Canarias}
\ead{t.canarias@campus.fct.unl.pt}

\author[AK]{Alexei Karlovich\corref{Alexei}}
\ead{oyk@fct.unl.pt}

\author[ES]{Eugene Shargorodsky}
\ead{eugene.shargorodsky@kcl.ac.uk}

\cortext[Alexei]{Corresponding author}

\address[TC]{
Departamento de Matem\'atica,
Faculdade de Ci\^encias e Tecnologia,\\
Universidade Nova de Lisboa,
Quinta da Torre,
2829--516 Caparica,
Portugal}

\address[AK]{
Centro de Matem\'atica e Aplica\c{c}\~oes,
Departamento de Matem\'atica,
Faculdade de Ci\^encias e Tecnologia,\\
Universidade Nova de Lisboa,
Quinta da Torre,
2829--516 Caparica,
Portugal}

\address[ES]{%
Department of Mathematics,
King's College London,
Strand, London WC2R 2LS,
United Kingdom}
\begin{abstract}
Let $BV_p[0,1]$, $1\le p<\infty$, be the Banach algebra of functions of 
bounded $p$-variation in the sense of Wiener. Recently, Kowalczyk and Turowska 
\cite{KT19} proved that the multiplication in $BV_1[0,1]$ is an open bilinear
mapping. We extend this result for all values of $p\in[1,\infty)$.
\end{abstract}
\begin{keyword}
Multiplication in a Banach algebra \sep 
open bilinear mapping \sep
Banach algebra of functions of bounded Wiener $p$-variation.
\end{keyword}
\end{frontmatter}
\section{Introduction}
Let $\mathcal{A}$ be a Banach algebra with a Banach algebra 
norm $\|\cdot\|_{\mathcal{A}}$.
We denote by $B_\mathcal{A}(a,\eps)$ the open ball in $\mathcal{A}$ centered
at $a$ of radius $\eps>0$, that is, 
\[
B_\mathcal{A}(a,\eps):=\{b\in\mathcal{A}:\|a-b\|_\mathcal{A}<\eps\}.
\]
We say that the multiplication in $\mathcal{A}$ is a bilinear mapping locally 
open at a pair $(a,b)\in\mathcal{A}^2:=\mathcal{A}\times\mathcal{A}$ if for 
every $\eps>0$ there exists a $\delta>0$ such that
\[
B_\mathcal{A}(a\cdot b,\delta)\subset 
B_\mathcal{A}(a,\eps)\cdot B_\mathcal{A}(b,\eps),
\]
where
\[
B_\mathcal{A}(a,\eps)\cdot B_\mathcal{A}(b,\eps)
:=
\{c\cdot d\in\mathcal{A}\ :\ 
c\in B_\mathcal{A}(a,\eps),\ d\in B_\mathcal{A}(b,\eps)\}.
\]
Following \cite{KT19}, the multiplications in $\mathcal{A}$ is called
an open bilinear mapping if it is locally open at every pair 
$(a,b)\in\mathcal{A}^2$.

Note that the multiplication might not be an open bilinear mapping even in 
very simple situations. For instance, if $\mathcal{A}=C[0,1]$ is the algebra
of real continuous functions with the supremum norm
\begin{equation}\label{eq:supremum-norm}
\|f\|_\infty:=\sup_{x\in[0,1]}|f(x)|,
\end{equation}
then for the function $g=x-1/2$ one has 
$g^2\in (B_\mathcal{A}(g,1/2))^2\setminus
\operatorname{int}\big((B_\mathcal{A}(g,1/2))^2\big)$,
where $\operatorname{int}(S)$ denotes the interior of a set $S$ 
(see \cite{BWW05}). Thus, the multiplication is not an open bilinear mapping 
in the algebra $C[0,1]$. This result was extended in \cite{W08} to the case
of the algebra $C^n[0,1]$ of $n$ times continuously differentiable functions.

The aim of this paper is to show that the multiplication is an open biliniear
mapping in the Banach algebra $BV_p[0,1]$, $1\le p<\infty$, of functions of 
bounded Wiener $p$-variation, extending the recent result by Kowlaczyk and Turowska
\cite{KT19} for $p=1$ to all values $p\in[1,\infty)$.

Let us recall the definition of functions of bounded Wiener $p$-variation. 
Suppose that $0\le\alpha\le\beta\le 1$.  
Let $\mathcal{P}[\alpha,\beta]$ be the set of all
partitions $P=\{t_0,\dots,t_m\}$ of the segment $[\alpha,\beta]$ of the 
form 
\[
\alpha=t_0<t_1<\dots<t_m=\beta. 
\]
Following \cite{W24} and 
\cite[Definition~1.31]{ABM14}, for a given a real number $p\in[1,\infty)$,
a partition $P=\{t_0,\dots,t_m\}\in\mathcal{P}[\alpha,\beta]$ and
a function $f:[\alpha,\beta]\to\F\in\{\R,\C\}$, the nonnegative number
\[
\var(f,P,[\alpha,\beta]):=\sum_{j=1}^{m}|f(t_j)-f(t_{j-1})|^p
\]
is called the Wiener $p$-variation of $f$ on $[\alpha,\beta]$ with respect to
$P$, while the (possibly infinite) number
\[
\var(f,[\alpha,\beta]):=
\sup\{\var(f,P,[\alpha,\beta]):P\in\mathcal{P}[\alpha,\beta]\},
\]
where the supremum is taken over all partitions of $[\alpha,\beta]$,
is called the total Wiener $p$-variation of $f$
on $[\alpha,\beta]$. Let
\[
BV_p[0,1]:=\{f:[0,1]\to \F\in\{\R,\C\} : \var(f,[0,1]<\infty\}
\]
be the set of all functions of bounded Wiener $p$-variation.
It is well known that $BV_p[0,1]$ is a Banach algebra
with respect to the pointwise multiplication and the norm
\begin{equation}\label{eq:BVp-norm}
\|f\|_{BV_p}:=\|f\|_\infty+\big(\var(f,[0,1])\big)^{1/p},
\end{equation}
where $\|f\|_\infty$ is given by \eqref{eq:supremum-norm}
(for instance, this result follows from \cite[Theorem~3.7 and 
Corollary~3.8]{DN11} with $\Phi(t)=t^p$, $1\le p<\infty$).
\begin{theorem}[Main result]
\label{th:main}
Let $1\le p<\infty$. Then the multiplication in the Banach algebra $BV_p[0,1]$
is an open bilinear mapping.
\end{theorem}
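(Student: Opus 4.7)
The plan is to extend the construction of Kowalczyk and Turowska \cite{KT19} from $p=1$ to all $p \in [1,\infty)$, producing the factorization $h = uv$ by a piecewise device. Given $(f,g) \in BV_p[0,1]^2$ and $\eps > 0$, I will choose $\delta = \delta(f,g,\eps)$ such that for every $h$ with $\|h - fg\|_{BV_p} < \delta$, setting $\eta := h - fg$, there exist $u, v \in BV_p[0,1]$ satisfying $uv = h$, $\|u-f\|_{BV_p} < \eps$, and $\|v-g\|_{BV_p} < \eps$. The first step is to partition $[0,1]$ into finitely many subintervals $I_1,\ldots,I_N$ on each of which, for a small fixed threshold $\sigma$, one of three situations prevails: (A) $|g| \ge \sigma$; (B) $|f| \ge \sigma$; (C) both $|f|, |g| < 2\sigma$. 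The existence of such a partition follows from the fact that any function of bounded $p$-variation is regulated and has only finitely many jumps exceeding any prescribed size, so $[0,1]$ splits into finitely many intervals on each of which $|f|$ and $|g|$ have oscillation at most $\sigma$; each such interval then falls into one of the three categories.

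On each subinterval, $u$ and $v$ are defined locally. On a type-(A) interval set $v|_{I_k} := g|_{I_k}$ and $u|_{I_k} := h|_{I_k}/g|_{I_k}$, so that $u - f = \eta/g$ has $BV_p$-norm bounded by a constant (depending on $g|_{I_k}$ and $\sigma$) times $\|\eta\|_{BV_p}$, using that $BV_p[0,1]$ is a Banach algebra and $1/g \in BV_p(I_k)$ once $|g| \ge \sigma$. The type-(B) case is symmetric. On a type-(C) interval set $u|_{I_k} := \alpha_k$ (a small constant of order $\sqrt{\delta}$) and $v|_{I_k} := h|_{I_k}/\alpha_k$; since $|fg| \le 4\sigma^2$ on $I_k$, the estimate $\|h\|_{\infty, I_k} \le 4\sigma^2 + \|\eta\|_\infty$ keeps both $u$ and $v$ small on $I_k$. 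The global bounds on $\|u-f\|_{BV_p}$ and $\|v-g\|_{BV_p}$ are then obtained by handling the sup-norm pointwise and estimating the $p$-variation via Minkowski's inequality $\var(\phi+\psi)^{1/p} \le \var(\phi)^{1/p} + \var(\psi)^{1/p}$, applied to the decomposition of $u-f$ into pieces supported on each $I_k$ plus jump contributions at the partition points $x_1,\ldots,x_{N-1}$.

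The main obstacle is the treatment of the type-(C) regions. A naive factorization such as $u := |h|^{1/2}$, $v := \mathrm{sign}(h)|h|^{1/2}$ would fail because the square-root map does not preserve $BV_p$ near zero; this forces the constant-approximation $u := \alpha_k$, which in turn introduces artificial jumps at the endpoints of each type-(C) subinterval. A second difficulty, specific to $p > 1$, is that the argument of \cite{KT19} for $p=1$ exploits the additivity of the total variation $V_1$ over disjoint subintervals, whereas for $p > 1$ the $p$-variation is only super-additive. Consequently, controlling the global $p$-variation of the piecewise construction requires a Minkowski-based analysis together with a careful simultaneous balance of $N$, $\sigma$, the constants $\alpha_k$, and $\delta$, arranging that each of the finitely many local and jump contributions falls below $\eps/(2N)^{1/p}$ in $p$-variation and $\eps/(2N)$ in sup-norm.
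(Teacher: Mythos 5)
Your architecture is genuinely different from the paper's (which first proves local openness at \emph{jointly nondegenerate} pairs by an iterative Newton-type correction using inverse-closedness and a Helly selection principle, and separately shows every pair can be approximated by a nondegenerate pair with the same product), but your direct piecewise factorization has two concrete gaps, both concentrated in the type-(C) regions. First, the choice $\alpha_k\sim\sqrt{\delta}$ does not work: on a type-(C) interval $v=h/\alpha_k$ satisfies only $\|v\|_{\infty,I_k}\le(4\sigma^2+\delta)/\sqrt{\delta}$, which is of order $4\sigma^2/\sqrt{\delta}$ and is \emph{not} small for the small $\delta$ you must eventually take (even the balanced choice $\delta=\sigma^4$ gives $\|v\|_{\infty,I_k}\approx 4$, while $|g|<2\sigma$ there, so $\|v-g\|_\infty\not<\eps$). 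One can repair the sup-norm by taking $\alpha_k\sim\eps$, but then the artificial jumps you acknowledge become fatal: the jump of $u-f$ at a type-(C) endpoint is of order $\alpha_k$, forcing $N\alpha_k^p\lesssim\eps^p$, i.e.\ $\alpha_k\lesssim\eps N^{-1/p}$, while the jump of $v-g$ is of order $\sigma^2/\alpha_k$, forcing $\alpha_k\gtrsim\sigma^2N^{1/p}/\eps$. These are compatible only if $\sigma^2N^{2/p}\lesssim\eps^2$; but the number $N=N(\sigma)$ of intervals needed to make the oscillation of $f$ and $g$ less than $\sigma$ grows like $\sigma^{-p}$ in general, so $\sigma^2N^{2/p}$ stays bounded below and the "careful simultaneous balance" you defer to cannot be achieved with a single threshold $\sigma$. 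The paper escapes this circularity with a two-threshold scheme: a first threshold $\eta$ determines a \emph{finite} number $N$ of bad intervals, and only then is a second, much smaller threshold $\rho=\rho(N,\eps)$ used to select the intervals that are actually modified.

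Second, even on a single type-(C) interval your variation estimate is incomplete. There $v-g=g(f/\alpha_k-1)+\eta/\alpha_k$, and the product rule for $p$-variation leaves a term essentially equal to $\var(g,I_k)$. The hypothesis $|g|<2\sigma$ on $I_k$ gives no bound on $\var(g,I_k)$ beyond $\var(g,[0,1])$: a function can have large $p$-variation while remaining uniformly small. What is needed is precisely the paper's Key Lemma (Lemma~\ref{le:Shar}): for $f\in BV_p[0,1]$ and any $\eps>0$ there is $\delta>0$ such that the $p$-variation sum over any finite set of points where $|f|<\delta$ is less than $\eps$. This is the technical heart of the extension from $p=1$ to general $p$ (its proof requires the elementary inequalities of Lemmas~\ref{le:Shar-1}--\ref{le:Shar-2} and a decomposition into positive/negative and real/imaginary parts), and it is entirely absent from your argument. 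Without it neither $\sum_k\var(g,I_k)$ over the type-(C) intervals nor the analogous sums for $f$ can be controlled, so the global bound $\|v-g\|_{BV_p}<\eps$ cannot be closed.
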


The paper is organized as follows. In Section~\ref{sec:local-openness},
following the main lines of the proof of \cite[Theorem~2.4]{KT19},
we show that the multiplication in a Banach algebra continuously 
embedded into the Banach algebra $B[0,1]$ of bounded functions 
and satisfying natural assumptions (the so-called symmetry property, the 
inverse closedness property and the selection principle) is locally open at 
every pair  of functions $(F,G)$ such that $|F|+|G|$ is bounded away from 
zero. We call such functions $F$ and $G$ jointly nondegenerate. 
Further, we show that the Banach algebra $BV_p[0,1]$ of functions
of bounded $p$-variation in the Wiener sense and the Banach algebra 
$\Lambda_p BV[0,1]$ of functions of bounded variation in the 
Shiba-Waterman sense (see \cite{HLP11,S80,W72}) satisfy
the hypotheses of the above result. In Section~\ref{sec:key-lemma},
we extend \cite[Lemma~2.1]{KT19} from the setting of $BV_1[0,1]$
to the setting of $BV_p[0,1]$ with an arbitrary $p\ge 1$. We should note that
the passage from $p=1$ to an arbitrary $p\ge 1$ is not trivial. 
In Section~\ref{sec:approximation-nondegenerated}, with the aid
of the main result of Section~\ref{sec:key-lemma} and following the scheme 
of the proof of \cite[Theorem~2.2]{KT19}, we show that an arbitrary
pair of functions $(F,G)\in (BV_p[0,1])^2$ can be approximated
by a pair of jointly nondegenerate functions $(F_1,G_1)\in (BV_p[0,1])^2$
such that $F\cdot G=F_1\cdot G_1$. In Section~\ref{sec:proof-conjecture}, 
we prove Theorem~\ref{th:main} combining the results of 
Sections~\ref{sec:local-openness} and \ref{sec:approximation-nondegenerated}. 
We conclude the paper with the conjecture that multiplication is an open 
bilinear mapping also in the  Banach algebra $\Lambda_p BV[0,1]$ of 
functions of bounded variation in the sense of Shiba-Waterman. 

This work started as an Undergraduate Research Opportunity Project of the 
first author at NOVA University of Lisbon in January-February of 2020
under the supervision of the second author.
\section{Local openness of multiplication in algebras of bounded functions}
\label{sec:local-openness}
Let $B[0,1]$ denote the Banach algebra of all bounded functions 
$f:[0,1]\to\F$, where $\F\in\{\R,\C\}$, with the norm given by 
\eqref{eq:supremum-norm}. We say that functions $f,g\in B[0,1]$ 
are jointly nondegenerate  if
\[
\inf_{x\in[0,1]}\big(|f(x)|+|g(x)|\big)>0.
\]

Let $\mathcal{F}[0,1]$ be a Banach algebra equipped with a norm 
$\|\cdot\|_{\mathcal{F}}$ and continuously embedded into the algebra 
$B[0,1]$. 
We will say that the algebra $\mathcal{F}[0,1]$ satisfies the symmetry 
property if for every function $f\in\mathcal{F}[0,1]$, its complex conjugate 
$\overline{f}$ also belongs to $\mathcal{F}[0,1]$ and 
$\| \overline{f} \|_{\mathcal{F}} = \| f \|_{\mathcal{F}}$. It is clear 
that every real algebra $\mathcal{F}[0,1]$ has the symmetry property.

Further, we will say that $\mathcal{F}[0,1]$ satisfies the inverse 
closedness property if for every $f\in\mathcal{F}[0,1]$, the inequality
\[
\inf_{x\in[0,1]}|f(x)|>0
\]
implies that $1/f\in\mathcal{F}[0,1]$ and
\[
\left\|\frac{1}{f}\right\|_{\mathcal{F}}
\le
\left(\inf_{x\in[0,1]}|f(x)|\right)^{-2}
\|f\|_{\mathcal{F}}.
\]

Finally, we will say that $\mathcal{F}[0,1]$ satisfies the selection 
principle if from every sequence of functions $\{f_n\}$ satisfying 
\[
\sup_{n\in\N}\|f_n\|_{\mathcal{F}}<\infty
\]
one can extract a subsequence $\{f_{n_k}\}$ that converges pointwise on 
$[0,1]$ to a function $f\in\mathcal{F}[0,1]$.
\begin{theorem}\label{th:local}
Let $\mathcal{F}[0,1]$ be a Banach algebra continuously embedded into 
the Banach algebra $B[0,1]$. Suppose that the algebra $\mathcal{F}[0,1]$ 
satisfies the symmetry property, the inverse closedness property and the
selection principle. Then the multiplication in $\mathcal{F}[0,1]$ is locally 
open at every pair of jointly nondegenerate functions 
$(F,G)\in (\mathcal{F}[0,1])^2$. 
\end{theorem}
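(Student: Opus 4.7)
The plan is to follow the strategy of \cite[Theorem~2.4]{KT19}: given a perturbation $H$ of $F\cdot G$ in $\mathcal{F}[0,1]$, I would construct the factors $F_1,G_1$ by a weighted Newton-type iteration that exploits the joint nondegeneracy, and then invoke the selection principle to pass to the limit. As a preliminary step, I would use the three structural assumptions to build the resolvent of the scheme: the hypothesis $|F|+|G|\geq c>0$ yields $|F|^2+|G|^2\geq c^2/2$; the symmetry property places $\overline{F},\overline{G}\in\mathcal{F}[0,1]$, so $W:=F\overline{F}+G\overline{G}\in\mathcal{F}[0,1]$ with $\inf_{x}W(x)\geq c^2/2$; hence inverse closedness gives $1/W\in\mathcal{F}[0,1]$, and consequently $\overline{G}/W$ and $\overline{F}/W$ lie in $\mathcal{F}[0,1]$ with norms controlled by $\|F\|_{\mathcal{F}}$, $\|G\|_{\mathcal{F}}$, and $c$.

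Given $H$ with $\|H-FG\|_{\mathcal{F}}<\delta$, I would set $F^{(0)}:=F$, $G^{(0)}:=G$, and inductively
\[
F^{(n+1)} := F^{(n)} + \frac{\Delta^{(n)}\,\overline{G^{(n)}}}{W^{(n)}},\qquad G^{(n+1)} := G^{(n)} + \frac{\Delta^{(n)}\,\overline{F^{(n)}}}{W^{(n)}},
\]
where $\Delta^{(n)}:=H-F^{(n)}G^{(n)}$ and $W^{(n)}:=|F^{(n)}|^2+|G^{(n)}|^2$. Expanding $F^{(n+1)}G^{(n+1)}$ and using $\overline{G^{(n)}}G^{(n)}+F^{(n)}\overline{F^{(n)}}=W^{(n)}$ collapses the first-order correction and yields
\[
\Delta^{(n+1)} = -\frac{(\Delta^{(n)})^{2}\,\overline{F^{(n)}G^{(n)}}}{(W^{(n)})^{2}},
\]
so the defect squares at each step. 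An induction combining the Banach-algebra inequality in $\mathcal{F}[0,1]$ with the continuous embedding $\mathcal{F}[0,1]\hookrightarrow B[0,1]$ then shows that, for $\delta$ small enough depending only on $\eps$, $c$, $\|F\|_{\mathcal{F}}$, and $\|G\|_{\mathcal{F}}$, the iterates stay in $B_{\mathcal{F}}(F,\eps)$ and $B_{\mathcal{F}}(G,\eps)$, the weights $W^{(n)}$ remain pointwise above $c^{2}/4$ (so inverse closedness applies uniformly to $1/W^{(n)}$), and $\|\Delta^{(n)}\|_{\mathcal{F}}$ decays at least geometrically.

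Finally, since $\{F^{(n)}\}$ and $\{G^{(n)}\}$ are uniformly bounded in the $\mathcal{F}$-norm, the selection principle supplies subsequences converging pointwise on $[0,1]$ to some $F_1,G_1\in\mathcal{F}[0,1]$. Because $\Delta^{(n)}\to 0$ in $\mathcal{F}[0,1]$ and hence in $B[0,1]$, passing to the pointwise limit yields $F_1\cdot G_1 = H$, and the bounds $\|F_1-F\|_{\mathcal{F}},\|G_1-G\|_{\mathcal{F}}<\eps$ transfer from the uniform estimates on the tails of the iteration.

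The main obstacle is the coupled bookkeeping in the inductive step: each squaring estimate for $\Delta^{(n)}$ costs a factor involving $\|F^{(n)}\|_{\mathcal{F}}$, $\|G^{(n)}\|_{\mathcal{F}}$, and $\|1/W^{(n)}\|_{\mathcal{F}}^{2}$, so these three quantities must be kept under control simultaneously. The interplay between symmetry (to write $W=F\overline{F}+G\overline{G}\in\mathcal{F}[0,1]$), the pointwise lower bound on $W^{(n)}$ inherited via the embedding into $B[0,1]$, and inverse closedness (to lift that pointwise bound into $\mathcal{F}$-norm control of $1/W^{(n)}$) is precisely the mechanism that keeps the scheme closed, which is why all three hypotheses of the theorem must appear together.
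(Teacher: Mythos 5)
Your proposal follows essentially the same route as the paper's proof: the same Newton-type iteration with updates $F_{n+1}=F_n+h_n\overline{G_n}/(|F_n|^2+|G_n|^2)$, the same quadratic recursion for the defect, the same inductive control of the norms, the pointwise lower bound on $|F_n|^2+|G_n|^2$, and the same use of the selection principle to pass to the limit. The sketch is correct and matches the paper's argument (which itself adapts \cite[Theorem~2.4]{KT19}).
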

\begin{proof}
The proof is analogous to that of \cite[Theorem~2.4]{KT19}. 
Since $\mathcal{F}[0,1]$ is continuously embedded into $B[0,1]$,
there is a constant $C\ge 1$ such that for all $f\in\mathcal{F}[0,1]$,
\begin{equation}\label{eq:local-0}
\sup_{x\in[0,1]}|f(x)|\le C\|f\|_{\mathcal{F}}.
\end{equation}
Without loss of generality, we can suppose that $\eps\in(0,1)$. Take
\begin{equation}\label{eq:local-1}
\delta:=\min\left\{1,\frac{1}{2}\inf_{x\in[0,1]}
\big(|F(x)|+|G(x)|\big)\right\}
\end{equation}
and
\begin{equation}\label{eq:local-2}
K:=2\max\big\{\|F\|_\mathcal{F},\|G\|_{\mathcal{F}},1\big\}.
\end{equation}
Let $h\in\mathcal{F}[0,1]$ be such that
\begin{equation}\label{eq:local-3}
\|h\|_{\mathcal{F}}<\eps\cdot \frac{\delta^8}{128 C K^6}.
\end{equation}
Consider
\begin{equation}\label{eq:local-4}
F_0:=F,\quad G_0:=G,\quad h_0:=h
\end{equation}
and define sequences $\{F_n\}_{n=0}^\infty$, $\{G_n\}_{n=0}^\infty$, and
$\{h_n\}_{n=0}^\infty$ inductively by
\begin{align}
F_{n+1} &:= F_n+h_n\cdot\frac{\overline{G_n}}{|F_n|^2+|G_n|^2},
\label{eq:local-5}
\\
G_{n+1} &:= G_n+h_n\cdot\frac{\overline{F_n}}{|F_n|^2+|G_n|^2},
\label{eq:local-6}
\\
h_{n+1}&:=-h_n^2\cdot\frac{\overline{F_n G_n}}{(|F_n|^2+|G_n|^2)^2}.
\label{eq:local-7}
\end{align}

We claim that for $n\in\N\cup\{0\}$,
\begin{enumerate}
\item[(i)]
\[
F_nG_n+h_n=FG+h,
\]
\item[(ii)] 
\[
\|F_n\|_{\mathcal{F}}\le\frac{K}{2}+1-2^{-n},
\quad
\|G_n\|_{\mathcal{F}}\le\frac{K}{2}+1-2^{-n},
\]
\item[(iii)]
\[
\inf_{x\in[0,1]}\big(|F_n(x)|+|G_n(x)|\big)\ge\delta+\delta\cdot 2^{-n},
\]
\item[(iv)]
\[
\|h_n\|_{\mathcal{F}}\le\eps\cdot 2^{-n}\cdot\frac{\delta^8}{128C K^6}.
\]
\end{enumerate}

We will prove these claims by induction. It follows from 
\eqref{eq:local-4} that
\[
F_0G_0+h_0=FG+h.
\]
We obtain from \eqref{eq:local-1}--\eqref{eq:local-4} that
\[
\|F_0\|_{\mathcal{F}}=\|F\|_{\mathcal{F}}\le\frac{K}{2},
\quad
\|G_0\|_{\mathcal{F}}=\|G\|_{\mathcal{F}}\le\frac{K}{2},
\quad
\|h_0\|_{\mathcal{F}}=\|h\|_{\mathcal{F}}<\eps\cdot\frac{\delta^8}{128CK^6},
\]
\[
\inf_{x\in[0,1]}\big(|F_0(x)|+|G_0(x)|\big)
=
\inf_{x\in[0,1]}\big(|F(x)|+|G(x)|\big)\ge 2\delta.
\]
That is, (i)--(iv) are satisfied for $n=0$.

Now we assume that (i)--(iv) are fulfilled for some $n\in\N\cup\{0\}$. 
Then, taking into account \eqref{eq:local-2}, we see that $K/2\ge 1$ and
\begin{align}
&
F_nG_n+h_n=FG+h,
\label{eq:local-8}
\\
& 
\|F_n\|_{\mathcal{F}}\le\frac{K}{2}+1-2^{-n}<K,
\label{eq:local-9}
\\
& 
\|G_n\|_{\mathcal{F}}\le\frac{K}{2}+1-2^{-n}<K,
\label{eq:local-10}
\\
&\inf_{x\in[0,1]}\big(|F_n(x)|+|G_n(x)|\big)\ge\delta+\delta\cdot 2^{-n}>\delta,
\label{eq:local-11}
\\
&
\|h_n\|_{\mathcal{F}}\le \eps\cdot 2^{-n}\cdot\frac{\delta^8}{128CK^6}.
\label{eq:local-12}
\end{align}
Let us show that (i)--(iv) are fulfilled for $n+1$.

(i) It follows from \eqref{eq:local-5}--\eqref{eq:local-8} that
\begin{align*}
F_{n+1}G_{n+1}+h_{n+1}
&=
\left(F_n+\frac{h_n\cdot\overline{G_n}}{|F_n|^2+|G_n|^2}\right)
\left(G_n+\frac{h_n\cdot\overline{F_n}}{|F_n|^2+|G_n|^2}\right)
-
\frac{h_n^2\cdot\overline{F_nG_n}}{(|F_n|^2+|G_n|^2)^2}
\\
&= 
F_nG_n+h_n\frac{F_n\overline{F_n}+G_n\overline{G_n}}{|F_n|^2+|G_n|^2}
+
h_n^2\frac{\overline{F_nG_n}}{(|F_n|^2+|G_n|^2)^2}
-
h_n^2\frac{\overline{F_nG_n}}{(|F_n|^2+|G_n|^2)^2}
\\
&= 
F_nG_n+h_n=FG+h.
\end{align*}
Hence, (i) is satisfied for $n+1$.

(ii) Since $\mathcal{F}[0,1]$ is a Banach algebra 
satisfying the symmetry property, we obtain from
\eqref{eq:local-9} and \eqref{eq:local-10} that
\begin{align}
\| |F_n|^2+|G_n|^2\|_{\mathcal{F}}
&\le 
\|F_n\cdot \overline{F_n}\|_{\mathcal{F}}
+
\|G_n\cdot\overline{G_n}\|_{\mathcal{F}}
\le
\| F_n\|_{\mathcal{F}}
\|\overline{F_n}\|_{\mathcal{F}}
+
\|G_n\|_{\mathcal{F}}\|\overline{G_n}\|_{\mathcal{F}}
\nonumber\\
&= 
\|F_n\|_{\mathcal{F}}^2 +\|G_n\|_{\mathcal{F}}^2
\le
K^2+K^2=2K^2.
\label{eq:local-13}
\end{align}
It follows from \eqref{eq:local-11} that for every $x\in[0,1]$,
\[
\delta^2 
\le 
\big(|F_n(x)|+|G_n(x)|\big)^2
=
|F_n(x)|^2+2|F_n(x)|\cdot|G_n(x)|+|G_n(x)|^2
\le 
2\big(|F_n(x)|^2+|G_n(x)|^2\big).
\]
Hence
\begin{equation}\label{eq:local-14}
\inf_{x\in[0,1]}\big(|F_n(x)|^2+|G_n(x)|^2\big)\ge\frac{\delta^2}{2}.
\end{equation}
Taking into account that $\mathcal{F}[0,1]$ is a Banach algebra
with the symmetry property, it follows from
\eqref{eq:local-5} and \eqref{eq:local-9}--\eqref{eq:local-10} that
\begin{align}
\|F_{n+1}\|_{\mathcal{F}}
&\le 
\|F_n\|_{\mathcal{F}}+\|h_n\|_{\mathcal{F}}\|G_n\|_{\mathcal{F}}
\left\|\frac{1}{|F_n|^2+|G_n|^2}\right\|_{\mathcal{F}}
\nonumber\\
&\le 
\left(\frac{K}{2}+1-2^{-n}\right)+\|h_n\|_{\mathcal{F}}K
\left\|\frac{1}{|F_n|^2+|G_n|^2}\right\|_{\mathcal{F}}.
\label{eq:local-15}
\end{align}
Since $\mathcal{F}[0,1]$ has the inverse closedness property,
we deduce from \eqref{eq:local-13}--\eqref{eq:local-14} that
\begin{align}
\left\|\frac{1}{|F_n|^2+|G_n|^2}\right\|_{\mathcal{F}} 
\le
\left(\inf_{x\in[0,1]}\big(|F_n(x)|^2+|G_n(x)|^2\big)\right)^{-2}
\||F_n|^2+|G_n|^2\|_{\mathcal{F}} 
\le \left(\frac{2}{\delta^2}\right)^2 2K^2
=
\frac{8K^2}{\delta^4}.
\label{eq:local-16}
\end{align}
Combining \eqref{eq:local-15}--\eqref{eq:local-16} with \eqref{eq:local-12}
and taking into account that $\eps\in(0,1)$ and $C\ge 1$, we obtain
\begin{align}
\|F_{n+1}\|_{\mathcal{F}}
\le 
\frac{K}{2}+1-2^{-n}+\frac{8K^3}{\delta^4}
\cdot\eps\cdot 2^{-n}\cdot\frac{\delta^8}{128CK^6}
<
\frac{K}{2}+1-2^{-n}+2^{-n}\frac{\delta^4}{16K^3}.
\label{eq:local-17}
\end{align}
It follow from \eqref{eq:local-1}--\eqref{eq:local-2} that 
$\delta\le 1\le K/2$. Therefore
\begin{equation}\label{eq:local-18}
\frac{\delta^4}{16K^3} = \frac{\delta}{16} \left(\frac{\delta}{K}\right)^3 
\le 
\frac{\delta}{16}\cdot \frac{1}{8} 
= 
\frac{\delta}{128}<\frac{1}{2}.
\end{equation}
In view of \eqref{eq:local-17}--\eqref{eq:local-18} we obtain
\[
\|F_{n+1}\|_{\mathcal{F}}<\frac{K}{2}+1-2^{-n}+2^{-n-1}
=
\frac{K}{2}+1-2^{-n-1}.
\]
Analogously it can be shown that
\[
\|G_{n+1}\|_{\mathcal{F}}
<
\frac{K}{2}+1-2^{-n-1}.
\]
Thus, (ii) is fulfilled for $n+1$.

(iii) Since $\mathcal{F}[0,1]$ is a Banach algebra and $\eps\in(0,1)$,
it follows from \eqref{eq:local-5}, \eqref{eq:local-0},
\eqref{eq:local-10}, \eqref{eq:local-12}, \eqref{eq:local-16}, and
\eqref{eq:local-18} that for $x\in[0,1]$,
\begin{align*}
|F_n(x)|
&\le
|F_{n+1}(x)|+|h_n(x)|\frac{|G_n(x)|}{|F_n(x)|^2+|G_n(x)|^2}
\le 
|F_{n+1}(x)|+ C\|h_n\|_{\mathcal{F}}\|G_n\|_{\mathcal{F}}
\left\|\frac{1}{|F_n|^2+|G_n|^2}\right\|_{\mathcal{F}}
\\
&\le 
|F_{n+1}(x)|+
C\eps\cdot 2^{-n}\frac{\delta^8}{128CK^6}\cdot K\cdot
\frac{8K^2}{\delta^4}
<
|F_{n+1}(x)|+2^{-n}\cdot\frac{\delta^4}{16K^3}
<
|F_{n+1}(x)|+2^{-n}\cdot\frac{\delta}{4}.
\end{align*}
Hence
\begin{equation}\label{eq:local-19}
|F_{n+1}(x)|>|F_n(x)|-2^{-n-2}\delta,
\quad x\in[0,1].
\end{equation}
Analogously,
\begin{equation}\label{eq:local-20}
|G_{n+1}(x)|>|F_n(x)|-2^{-n-2}\delta,
\quad x\in[0,1].
\end{equation}
We conclude from \eqref{eq:local-11} and 
\eqref{eq:local-19}--\eqref{eq:local-20} that 
\begin{align*}
\inf_{x\in[0,1]}\big(|F_{n+1}(x)|+|G_{n+1}(x)|\big)
&\ge 
\inf_{x\in[0,1]}\big(|F_{n}(x)|+|G_{n}(x)|\big)
-2\cdot 2^{-n-2}\delta 
\\
&\ge 
\delta+\delta\cdot 2^{-n}-\delta\cdot 2^{-n-1}
=\delta+\delta\cdot 2^{-n-1}.
\end{align*}
Hence (iii) is fulfilled for $n+1$.

(iv) Since $\mathcal{F}[0,1]$ is a Banach algebra with the symmetry 
property, $\eps\in(0,1)$ and $C\ge 1$, it follows from 
\eqref{eq:local-7}, \eqref{eq:local-9}--\eqref{eq:local-10},
\eqref{eq:local-12} and \eqref{eq:local-16} that
\begin{align*}
\|h_{n+1}\|_{\mathcal{F}}
&\le 
\|h_n\|_{\mathcal{F}}^2
\|\overline{F_n}\|_{\mathcal{F}}
\|\overline{G_n}\|_{\mathcal{F}}
\left\|\frac{1}{|F_n|^2+|G_n|^2}\right\|_{\mathcal{F}}^2
=
\|h_n\|_{\mathcal{F}}^2
\|F_n\|_{\mathcal{F}}
\|G_n\|_{\mathcal{F}}
\left\|\frac{1}{|F_n|^2+|G_n|^2}\right\|_{\mathcal{F}}^2
\\
&\le
\left(\eps\cdot 2^{-n}\cdot\frac{\delta^8}{128CK^6}\right)^2
K^2
\left(\frac{8K^2}{\delta^4}\right)^2
=
\eps^2\cdot 2^{-2n-1}\cdot\frac{\delta^8}{128 C^2 K^6}
<
\eps\cdot 2^{-n-1}\cdot\frac{\delta^8}{128 C K^6}.
\end{align*}
Hence (iv) is fulfilled for $n+1$.

Thus, we have verified properties (i)--(iv) by induction for all 
$n\in\N\cup\{0\}$.

In view of (ii), the terms of the sequences $\{F_n\}_{n=0}^\infty$ 
and $\{G_n\}_{n=0}^\infty$ have uniformly bounded norms. By the selection
principle, there exist a subsequence $\{F_{n_k}\}_{k=0}^\infty$
of $\{F_n\}_{n=0}^\infty$ and a subsequence $\{G_{n_k}\}_{k=0}^\infty$
of $\{G_n\}_{n=0}^\infty$ such that for every $x\in[0,1]$,
\begin{equation}\label{eq:local-21}
\lim_{k\to\infty}F_{n_k}(x)=f(x),
\quad
\lim_{k\to\infty}G_{n_k}(x)=g(x),
\end{equation}
where $f,g\in\mathcal{F}[0,1]$.
It follows from \eqref{eq:local-0} and (iv) that for all $x\in[0,1]$,
\begin{equation}\label{eq:local-22}
\lim_{n\to\infty}|h_n(x)| 
\le
C\lim_{n\to\infty}\|h_n\|_{\mathcal{F}}
\le 
\frac{\eps\delta^8}{128CK^6}\lim_{n\to\infty}2^{-n}=0.
\end{equation}
In view of (i) and \eqref{eq:local-21}--\eqref{eq:local-22}, we obtain for 
$x\in[0,1]$,
\begin{align}
f(x)g(x)
=
\lim_{k\to\infty}F_{n_k}(x)G_{n_k}(x)
=
\lim_{k\to\infty}\big(F_{n_k}(x)G_{n_k}(x)+h_{n_k}(x)\big)
=F(x)G(x)+h(x).
\label{eq:local-23}
\end{align}
Since
\[
f(x)-F(x)
=
\lim_{k\to\infty}(F_{n_k}(x)-F(x))
=
\lim_{k\to\infty}\sum_{j=0}^{n_k}(F_{j+1}(x)-F_j(x))
=
\sum_{n=0}^\infty (F_{n+1}(x)-F_n(x)),
\]
$\mathcal{F}[0,1]$ is a Banach algebra with the symmetry property, 
$\eps\in(0,1)$ and $C\ge 1$,
we obtain from
\eqref{eq:local-5}, \eqref{eq:local-10}, \eqref{eq:local-12},
\eqref{eq:local-16},
and \eqref{eq:local-18}
that
\begin{align}
\|f-F\|_{\mathcal{F}}
&\le 
\sum_{n=0}^\infty \|F_{n+1}-F_n\|_{\mathcal{F}}
\le 
\sum_{n=0}^\infty 
\|h_n\|_{\mathcal{F}}
\|G_n\|_{\mathcal{F}}
\left\|\frac{1}{|F_n|^2+|G_n|^2}\right\|_{\mathcal{F}}
\nonumber\\
&\le 
\sum_{n=0}^\infty \eps\cdot 2^{-n}\cdot\frac{\delta^8}{128CK^6}
\cdot K\cdot\frac{8K^2}{\delta^4}
=
\frac{\eps}{C}\cdot\frac{\delta^4}{16K^3}\sum_{n=0}^\infty 2^{-n}
<
\eps.
\label{eq:local-24}
\end{align}
Analogously we can show that
\begin{equation}\label{eq:local-25}
\|g-G\|_{\mathcal{F}}<\eps.
\end{equation}
So, for every $h \in \mathcal{F}[0,1]$ satisfying \eqref{eq:local-3}, there 
exist $f$ and $g$ in $\mathcal{F}[0,1]$ such that \eqref{eq:local-24} and 
\eqref{eq:local-25} hold, and $FG + h = fg$ (see \eqref{eq:local-23}). 
This means that
\[
B_{\mathcal{F}[0,1]}(F\cdot G,\eta)
\subset 
B_{\mathcal{F}[0,1]}(F,\eps)\cdot B_{\mathcal{F}[0,1]}(G,\eps)
\]
with $\eta:=\eps\cdot\frac{\delta^8}{128CK^6}$.
Hence, the multiplication in the Banach algebra $\mathcal{F}[0,1]$
is locally open at the pair $(F,G)\in (\mathcal{F}[0,1])^2$.
\qed
\end{proof}
\begin{corollary}\label{co:local-BVp}
Let $1\le p<\infty$. Then the  multiplication in $BV_p[0,1]$ is locally open 
at every pair of jointly nondegenerate  functions $(F,G)\in (BV_p[0,1])^2$. 
\end{corollary}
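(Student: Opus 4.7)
The plan is to deduce the corollary from Theorem~\ref{th:local} applied with $\mathcal{F}[0,1]=BV_p[0,1]$, which reduces matters to verifying the four hypotheses of that theorem. The continuous embedding $BV_p[0,1]\hookrightarrow B[0,1]$ with constant $C=1$ is immediate from \eqref{eq:BVp-norm}, since $\|f\|_\infty\le\|f\|_{BV_p}$. The symmetry property follows from the identity $|\overline{f(t_j)}-\overline{f(t_{j-1})}|=|f(t_j)-f(t_{j-1})|$, which gives $\var(\overline f,[0,1])=\var(f,[0,1])$ for every partition, together with the trivial equality $\|\overline f\|_\infty=\|f\|_\infty$.

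For inverse closedness, fix $f\in BV_p[0,1]$ with $m:=\inf_{x\in[0,1]}|f(x)|>0$. The elementary identity $1/f(t_j)-1/f(t_{j-1})=(f(t_{j-1})-f(t_j))/(f(t_j)f(t_{j-1}))$, combined with $|f|\ge m$, yields $|1/f(t_j)-1/f(t_{j-1})|^p\le m^{-2p}|f(t_j)-f(t_{j-1})|^p$ for every partition, whence $(\var(1/f,[0,1]))^{1/p}\le m^{-2}(\var(f,[0,1]))^{1/p}$. Since $\|1/f\|_\infty=1/m\le m^{-2}\|f\|_\infty$ (using $\|f\|_\infty\ge m$), summing gives $\|1/f\|_{BV_p}\le m^{-2}\|f\|_{BV_p}$, exactly the required bound.

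The main obstacle is the selection principle, which amounts to a Helly-type theorem for Wiener $p$-variation. Given $\{f_n\}\subset BV_p[0,1]$ with $\sup_n\|f_n\|_{BV_p}\le M$, I would introduce the monotone nondecreasing functions $v_n(x):=\var(f_n,[0,x])$, uniformly bounded by $M^p$. Classical Helly selection for bounded monotone functions, combined with a diagonal argument on a countable dense subset $D\subset[0,1]$, yields a subsequence (still denoted $\{f_n\}$) along which $v_n\to v$ pointwise on $[0,1]$ for some monotone $v$ and $f_n$ converges pointwise on $D$. To promote convergence to every $x\in[0,1]$, I would exploit the superadditivity $v_n(s)-v_n(r)\ge\var(f_n,[r,s])\ge|f_n(s)-f_n(r)|^p$ valid for all $r<s$ and all $p\ge 1$: at any continuity point $x$ of $v$, sandwiching $x$ between dense points $r<s$ with $v(s)-v(r)$ arbitrarily small forces $f_n(x)$ to be Cauchy, while the at most countably many discontinuities of $v$ can be absorbed into $D$ by a further diagonal extraction. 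A Fatou-type passage to the limit applied to finite partition sums then shows that the pointwise limit $f$ satisfies $\var(f,[0,1])\le M^p$, so $f\in BV_p[0,1]$. With all four hypotheses verified, Theorem~\ref{th:local} yields the corollary.
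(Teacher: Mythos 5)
Your proposal is correct and follows essentially the same route as the paper: reduce to Theorem~\ref{th:local} and verify the continuous embedding, symmetry, inverse closedness (via the same identity $1/f(t_j)-1/f(t_{j-1})=(f(t_{j-1})-f(t_j))/(f(t_j)f(t_{j-1}))$ and the bound $\|1/f\|_\infty\le m^{-2}\|f\|_\infty$), and the selection principle. The only difference is that for the selection principle the paper simply cites a Helly-type theorem from the literature (Appell--Bana\'s--Merentes, with $\Phi(t)=t^p$), whereas you sketch a correct self-contained proof of it using the superadditivity of $x\mapsto\var(f_n,[0,x])$; both are fine.
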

\begin{proof}
We have to verify the hypotheses of Theorem~\ref{th:local}.
The definitions of the norms  \eqref{eq:BVp-norm} and \eqref{eq:supremum-norm}
immediately imply that the Banach algebra $BV_p[0,1]$ is continuously 
embedded into the Banach algebra $B[0,1]$ (with the embedding constant $1$)
and that the algebra $BV_p[0,1]$ satisfies the symmetry property.
It follows from the Helly-type selection theorem \cite[Theorem~2.49]{ABM14}
with $\Phi(t)=t^p$, $1\le p<\infty$, that $BV_p[0,1]$ satisfies the selection
principle. 

Let us show that $BV_p[0,1]$ has the inverse closedness property.
Take a function $f\in BV_p[0,1]$ such that
\begin{equation}\label{eq:local-BVp-1}
\inf_{x\in[0,1]}|f(x)|>0
\end{equation}
and a partition $P=\{t_0,\dots,t_m\}\in\mathcal{P}[0,1]$. Then $f(t_j)\ne 0$ for
$j\in\{0,\dots,m\}$ in view of \eqref{eq:local-BVp-1} and
\begin{align*}
\var(1/f,P,[0,1])
&=
\sum_{j=1}^m\left|\frac{1}{f(t_j)}-\frac{1}{f(t_{j-1})}\right|^p
=
\sum_{j=1}^m\left|\frac{f(t_j)-f(t_{j-1})}{f(t_j)f(t_j)}\right|^p
\\
&\le 
\left(\inf_{x\in[0,1]}|f(x)|\right)^{-2p}\var(f,P,[0,1]).
\end{align*}
Therefore 
\begin{equation}\label{eq:local-BVp-2}
\var(1/f,[0,1])\le\left(\inf_{x\in[0,1]}|f(x)|\right)^{-2p}\var(f,[0,1]).
\end{equation}
On the other hand,
\begin{equation}\label{eq:local-BVp-3}
\|1/f\|_\infty=\sup_{x\in[0,1]}|1/f(x)|=\left(\inf_{x\in[0,1]}|f(x)|\right)^{-1}.
\end{equation}
Combining \eqref{eq:local-BVp-2} and \eqref{eq:local-BVp-3}, we arrive at the 
following:
\begin{align}
\|1/f\|_{BV_p}
&=
\|1/f\|_\infty+\big(\var(1/f,[0,1])\big)^{1/p}
\nonumber\\
&\le 
\left(\inf_{x\in[0,1]}|f(x)|\right)^{-1}+
\left(\inf_{x\in[0,1]}|f(x)|\right)^{-2}\big(\var(f,[0,1]\big)^{1/p}
\nonumber\\
&\le  
\left(\inf_{x\in[0,1]}|f(x)|\right)^{-2}
\left(\|f\|_\infty+\big(\var(f,[0,1]\big)^{1/p}\right)
\nonumber\\
&=
\left(\inf_{x\in[0,1]}|f(x)|\right)^{-2}\|f\|_{BV_p}.
\label{eq:local-BVp-4}
\end{align}
Thus $BV_p[0,1]$ satisfies the inverse closedness property. It remains to
apply Theorem~\ref{th:local}.
\qed
\end{proof}

Let us show that the hypotheses of Theorem~\ref{th:local} are also 
satisfied in the case of Banach algebras of functions of generalized 
variation in the Shiba-Waterman sense. Shiba \cite{S80} introduced the 
class $\Lambda_p BV[0,1]$ with $1\le p<\infty$, extending the concept of 
the bounded $\Lambda$-variation in the sense of Waterman \cite{W72}. 
Let $\Lambda=\{\lambda_i\}_{i=1}^\infty$ be a nondecreasing sequence of 
positive numbers such that 
$\sum_{i=1}^\infty\frac{1}{\lambda_i}=+\infty$ and let $1\le p<\infty$.
A function $f:[0,1]\to\F\in\{\R,\C\}$ is said to be of bounded 
$\Lambda_p$-variation in the Shiba-Waterman sense if 
\[
\operatorname{Vap}_{\Lambda_p}(f,[0,1])
:=
\sup\sum_{i=1}^n\frac{|f(I_i)|^p}{\lambda_i}<+\infty,
\]
where the supremum is taken over all finite families $\{I_i\}_{i=1}^n$
of nonoverlapping intervals on $[0,1]$ and
$f(I_i):=f(\sup I_i)-f(\inf I_i)$. Let $\Lambda_pBV[0,1]$ be the set of 
all functions $f:[0,1]\to\F\in\{\R,\C\}$ of bounded $\Lambda_p$-variation.
Kantorowitz \cite[Theorem~1]{K11} proved 
that $\Lambda_p BV[0,1]$ is a Banach algebra with respect to the pontwise
multiplication and the norm
\begin{equation}\label{eq:Shiba-Waterman}
\|f\|_{\Lambda_p BV}:=\|f\|_\infty+ 
\big(\operatorname{Vap}_{\Lambda_p}(f,[0,1])\big)^{1/p}.
\end{equation}
\begin{corollary}\label{co:local-Shiba-Waterman}
Let $1\le p<\infty$. Then the  multiplication in $\Lambda_p BV[0,1]$ is 
locally open at every pair of jointly nondegenerate  functions 
$(F,G)\in (\Lambda_p BV[0,1])^2$. 
\end{corollary}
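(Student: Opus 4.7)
The plan is to prove Corollary~\ref{co:local-Shiba-Waterman} in exact parallel with the proof of Corollary~\ref{co:local-BVp}: namely, by verifying that $\Lambda_p BV[0,1]$ satisfies each of the four hypotheses of Theorem~\ref{th:local}, and then simply invoking that theorem.

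First, I would dispose of the easy hypotheses. The norm formula \eqref{eq:Shiba-Waterman} contains $\|f\|_\infty$ as a summand, so the continuous embedding of $\Lambda_p BV[0,1]$ into $B[0,1]$ is immediate with embedding constant $1$. The symmetry property is equally immediate, since for every nonoverlapping family $\{I_i\}$ one has $|\overline{f}(I_i)|=|f(I_i)|$, and therefore $\operatorname{Vap}_{\Lambda_p}(\overline{f},[0,1])=\operatorname{Vap}_{\Lambda_p}(f,[0,1])$ and $\|\overline{f}\|_\infty=\|f\|_\infty$.

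Second, for the inverse closedness property I would mimic the computation in \eqref{eq:local-BVp-2}--\eqref{eq:local-BVp-4}. Let $f\in\Lambda_p BV[0,1]$ with $m:=\inf_{x\in[0,1]}|f(x)|>0$. For any nonoverlapping family $\{I_i\}_{i=1}^n$ with $I_i=[a_i,b_i]$, the identity
\[
\frac{1}{f(b_i)}-\frac{1}{f(a_i)}=\frac{f(a_i)-f(b_i)}{f(a_i)f(b_i)}
\]
yields $|(1/f)(I_i)|^p\le m^{-2p}|f(I_i)|^p$, so dividing by $\lambda_i$ and summing gives $\operatorname{Vap}_{\Lambda_p}(1/f,[0,1])\le m^{-2p}\operatorname{Vap}_{\Lambda_p}(f,[0,1])$. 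Combining this with $\|1/f\|_\infty=m^{-1}\le m^{-2}\|f\|_\infty$ (using $m\le\|f\|_\infty$) produces $\|1/f\|_{\Lambda_pBV}\le m^{-2}\|f\|_{\Lambda_pBV}$ in the same way as \eqref{eq:local-BVp-4}.

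Third, I would cite a Helly-type selection theorem for $\Lambda_p BV[0,1]$ to obtain the selection principle: any sequence with uniformly bounded $\Lambda_p BV$-norm has a subsequence converging pointwise to a function in $\Lambda_p BV[0,1]$. Such a result appears in the Shiba-Waterman literature (for instance in \cite{HLP11} or as an extension of the classical Waterman selection theorem in \cite{W72,S80}). This is the only step where I do not have an obvious self-contained argument: the main obstacle is locating a reference that states the Helly-type theorem in the precise Shiba-Waterman $\Lambda_p$ generality rather than the pure Waterman ($p=1$) case, and this is the step that the authors themselves flag as needing a citation. Once these four properties are established, Theorem~\ref{th:local} applies directly and yields local openness of multiplication at every jointly nondegenerate pair $(F,G)\in(\Lambda_p BV[0,1])^2$.
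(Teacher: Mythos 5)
Your proposal is correct and follows essentially the same route as the paper: verify the continuous embedding, symmetry, inverse closedness (via the identity $1/f(b)-1/f(a)=(f(a)-f(b))/(f(a)f(b))$ applied to each nonoverlapping interval), and the selection principle, then invoke Theorem~\ref{th:local}. The one step you flag as needing a reference is indeed handled by citation in the paper, namely \cite[Theorem~3.2]{HLP11}, which gives the Helly-type selection theorem in the full Shiba--Waterman $\Lambda_p BV$ setting.
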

\begin{proof}
As in the proof of the previous corollary, we have to verify the hypotheses 
of Theorem~\ref{th:local}.
The definitions of the norms  \eqref{eq:Shiba-Waterman} and 
\eqref{eq:supremum-norm} immediately imply that the Banach algebra 
$\Lambda_p BV[0,1]$ is continuously  embedded into the Banach algebra 
$B[0,1]$ (with the embedding constant $1$) and that the algebra 
$\Lambda_p BV[0,1]$ satisfies the symmetry property. The selection principle
for the algebra $\Lambda_p BV[0,1]$ is proved in \cite[Theorem~3.2]{HLP11}.

If $f\in\Lambda_p BV[0,1]$ satisifies \eqref{eq:local-BVp-1}, then
for every interval $I\subset[0,1]$,
\[
|(1/f)(I)|\le\left(\inf_{x\in[0,1]}|f(x)|\right)^{-2}|f(I)|.
\]
Therefore 
\begin{equation}\label{eq:local-Shiba-Waterman-1}
\operatorname{Var}_{\Lambda,p}(1/f,[0,1])
\le
\left(\inf_{x\in[0,1]}|f(x)|\right)^{-2p}
\operatorname{Vap}_{\Lambda_p}(f,[0,1]).
\end{equation}
Combining \eqref{eq:local-Shiba-Waterman-1} and \eqref{eq:local-BVp-3}, 
similarly to \eqref{eq:local-BVp-4}, we obtain
\[
\|1/f\|_{\Lambda_p BV}
\le
\left(\inf_{x\in[0,1]}|f(x)|\right)^{-2}\|f\|_{\Lambda_p BV}.
\]
Thus $\Lambda_p BV[0,1]$ satisfies the inverse closedness property. 
It remains to apply Theorem~\ref{th:local}.
\qed
\end{proof}
\section{Key lemma}\label{sec:key-lemma}
The aim of this section is to prove an extension of \cite[Lemma~2.1]{KT19}
for the Banach algebras $BV_p[0,1]$ with arbitrary $p\in[1,\infty)$.

Let us start with several elementary inequalities.
\begin{lemma}\label{le:Shar-1}
Let $1 \le p < \infty$. Then
\begin{equation}\label{eq:Shar-1}
(1 + x)^p \le 1 + p2^{p -1} x  \quad\mbox{for all}\quad x \in [0, 1] .
\end{equation}
\end{lemma}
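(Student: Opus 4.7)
The plan is to observe that \eqref{eq:Shar-1} is trivial at $x=0$ (both sides equal $1$) and then show that the right-hand side grows at least as fast as the left-hand side on $[0,1]$. Concretely, I would define
\[
\varphi(x) := 1 + p 2^{p-1} x - (1+x)^p
\]
and verify $\varphi \ge 0$ on $[0,1]$ by differentiation.

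The key computation is $\varphi'(x) = p 2^{p-1} - p(1+x)^{p-1}$. For $x \in [0,1]$ we have $1+x \le 2$, and since $p-1 \ge 0$ the function $t \mapsto t^{p-1}$ is nondecreasing on $[0,\infty)$, so $(1+x)^{p-1} \le 2^{p-1}$. Hence $\varphi'(x) \ge 0$ on $[0,1]$, which combined with $\varphi(0) = 0$ gives $\varphi(x) \ge 0$ on $[0,1]$, as required.

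Equivalently, and perhaps slightly more transparently, one can write
\[
(1+x)^p - 1 = \int_0^x p(1+t)^{p-1}\, dt \le \int_0^x p \cdot 2^{p-1}\, dt = p 2^{p-1} x,
\]
again using $(1+t)^{p-1} \le 2^{p-1}$ for $t \in [0,1]$. There is no real obstacle here; the only point worth flagging is that the bound $2^{p-1}$ on the derivative requires $p \ge 1$, which is exactly the standing hypothesis, and this is precisely what forces the factor $2^{p-1}$ (and not just $1$) on the right-hand side of \eqref{eq:Shar-1}.
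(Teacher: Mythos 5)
Your proof is correct and is essentially the paper's own argument: the authors also integrate the bound $(1+t)^{p-1}\le 2^{p-1}$ over $[0,x]$, which is exactly your second formulation (and your first formulation via $\varphi'\ge 0$ is the same idea phrased through monotonicity). Nothing further is needed.
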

\begin{proof}
Integrating both sides of the inequality
\[
(1 + t)^{p -1} \le 2^{p -1} , \quad t \in [0, 1] 
\]
from $0$ to $x$, one gets
\[
\frac1p\left((1 + x)^p - 1\right) \le 2^{p -1} x ,
\]
which is equivalent to \eqref{eq:Shar-1}.
\qed
\end{proof}
\begin{lemma}\label{le:Shar-2}
Let $1 \le p < \infty$. Then
\begin{equation}\label{eq:Shar-2}
(a + b)^p \le a^p + \max\{p, 2\}\, 2^{p -1} b  
\quad\mbox{for all}\quad a, b \in [0, 1] .
\end{equation}
\end{lemma}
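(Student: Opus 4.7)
The plan is to reduce Lemma 2.2 to Lemma 2.1 by splitting into two cases according to whether $b$ is small compared to $a$ or not.

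First, I would dispose of the trivial subcase $a = 0$ (or more generally $b \ge a$) by the brute bound $(a+b)^p \le (2b)^p = 2^p b^p \le 2^p b$, using $b \in [0,1]$ and $p \ge 1$. Since $2^p = 2 \cdot 2^{p-1} \le \max\{p, 2\}\, 2^{p-1}$, the desired inequality \eqref{eq:Shar-2} follows with the term $a^p \ge 0$ to spare.

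Second, in the complementary case $0 < b \le a$ (so in particular $a > 0$), I would factor out $a^p$ and apply Lemma~\ref{le:Shar-1} with $x = b/a \in [0,1]$:
\[
(a + b)^p = a^p (1 + b/a)^p \le a^p\bigl(1 + p\, 2^{p-1}\, b/a\bigr) = a^p + p\, 2^{p-1}\, a^{p-1}\, b.
\]
Since $a \in [0,1]$ and $p \ge 1$ give $a^{p-1} \le 1$, and $p \le \max\{p,2\}$, this yields $(a+b)^p \le a^p + \max\{p,2\}\, 2^{p-1}\, b$, completing that case.

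There is no real obstacle here: the only thing to watch is to pick the case split so that Lemma~\ref{le:Shar-1} is applied only when $b/a \in [0,1]$, and to observe the two complementary estimates $p \le \max\{p,2\}$ and $2 \le \max\{p,2\}$ that make the same constant work for small $p$ (where $p < 2$) and large $p$ (where $p \ge 2$) respectively.
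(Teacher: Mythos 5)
Your proof is correct and takes essentially the same route as the paper's: the identical case split between $b \le a$ (apply Lemma~\ref{le:Shar-1} with $x = b/a$ and use $a^{p-1} \le 1$) and $b > a$ (use the crude bound $(a+b)^p \le (2b)^p = 2^p b^p \le 2^p b$), with $\max\{p,2\}\,2^{p-1}$ absorbing both constants. The only cosmetic difference is that the paper isolates $a=0$ as a separate preliminary subcase while you fold it into the $b \ge a$ branch; the remaining degenerate point $b=0<a$, which your case labels technically omit, is trivial since the inequality then reads $a^p \le a^p$.
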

\begin{proof}
If $a = 0$ then \eqref{eq:Shar-2} holds because $b^p \le b$. Suppose $a > 0$. 
If $b \le a$, then it follows from Lemma~\ref{le:Shar-1} that
\begin{align}
(a + b)^p 
= 
a^p 
\left(1 +\frac{b}{a}\right)^p \le a^p \left(1 + p2^{p -1}\, \frac{b}{a}\right) 
= 
a^p + p2^{p -1} a^{p - 1} b \le a^p + p2^{p -1}  b . 
\label{eq:Shar-3}
\end{align}
If $b > a$, then
\begin{equation}\label{eq:Shar-4}
(a + b)^p < (2b)^p = 2^p b^p < a^p + 2^p b^p \le a^p + 2^p b .
\end{equation}
Estimate \eqref{eq:Shar-2} follows from \eqref{eq:Shar-3} and \eqref{eq:Shar-4}.
\qed
\end{proof}
\begin{corollary}\label{co:Shar}
Let $1 \le p < \infty$ and $u, v \in \mathbb{C}$ be such that 
$|u - v|, |v| \le 1$. Then
\begin{equation}\label{eq:Shar-5}
|u - v|^p \ge |u|^p - \max\{p, 2\}\, 2^{p -1} |v|.
\end{equation}
\end{corollary}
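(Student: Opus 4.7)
The plan is to deduce this immediately from Lemma~\ref{le:Shar-2} by choosing the right substitution. The key observation is that the triangle inequality gives $|u| \le |u - v| + |v|$, so raising to the $p$-th power (both sides are nonnegative) yields
\[
|u|^p \le (|u - v| + |v|)^p.
\]
Now I would apply Lemma~\ref{le:Shar-2} with $a := |u - v|$ and $b := |v|$. Both lie in $[0,1]$ by the hypothesis $|u-v|, |v| \le 1$, which is precisely what is needed to invoke that lemma.

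Applying it gives
\[
(|u - v| + |v|)^p \le |u - v|^p + \max\{p, 2\}\, 2^{p -1} |v|,
\]
and combining with the previous inequality and rearranging produces \eqref{eq:Shar-5}.

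There is essentially no obstacle here; the only thing to watch is to make sure the hypotheses $a, b \in [0, 1]$ of Lemma~\ref{le:Shar-2} are verified, which is exactly guaranteed by the stated conditions $|u - v|, |v| \le 1$. The whole argument is a two-line computation: triangle inequality, then Lemma~\ref{le:Shar-2}, then rearrangement.
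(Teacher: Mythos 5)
Your proof is correct and is exactly the argument used in the paper: the triangle inequality gives $|u|^p \le (|u-v|+|v|)^p$, and Lemma~\ref{le:Shar-2} with $a=|u-v|$, $b=|v|$ (both in $[0,1]$ by hypothesis) finishes the job after rearrangement.
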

\begin{proof}
Using \eqref{eq:Shar-2} with $a = |u - v|$ and $b = |v|$, one gets
\[
|u|^p \le (|u - v| + |v|)^p \le |u - v|^p + \max\{p, 2\}\, 2^{p -1} |v|,
\]
which immediately implies \eqref{eq:Shar-5}.
\qed
\end{proof}

The following lemma is a special case of the desired result for functions
with values in the segment $[0,1]$.
\begin{lemma}\label{le:Shar-4}
Let $1 \le p < \infty$ and let $f \in BV_p[0, 1]$ be such that 
$f : [0, 1] \to [0, 1]$. For any $\varepsilon > 0$ there exist $\eta > 0$ such 
that if
\begin{equation}\label{eq:Shar-6}
0 \le x_1 < x_2 <\cdots < x_m \le 1 \quad\mbox{and}\quad 
f(x_j) < \eta , \ j = 1, \dots, m ,
\end{equation}
then
\begin{equation}\label{eq:Shar-7}
\left(\sum_{j = 1}^{m - 1} |f(x_{j + 1}) - f(x_j)|^p\right)^{1/p} < \varepsilon.
\end{equation}
\end{lemma}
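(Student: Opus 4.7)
The plan is to compare an admissible partition $P$ to a near-optimal partition $Q$ of $[0,1]$ for the $p$-variation of $f$. Let $V := \var(f,[0,1])$ and fix $Q = \{0 = q_0 < q_1 < \cdots < q_N = 1\}$ with
$$
S(Q) := \sum_{i=1}^{N} |f(q_i)-f(q_{i-1})|^p > V - (\varepsilon/4)^p.
$$
Set $M := \min\{f(q_i) : f(q_i) > 0\}$ (with $M := 1$ if no such $i$ exists) and $C := \max\{p,2\}\,2^{p-1}$. Choose $\eta \in (0, \min(M/2, 1))$ small enough that an inequality of the form $2NC\eta + O(N\eta^p) < \varepsilon^p - (\varepsilon/4)^p$ holds (the hidden constant being absolute). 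By construction every value $f(q_i)$ lies in $\{0\} \cup (2\eta, 1]$.

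For any admissible $x_1 < \cdots < x_m$ with $f(x_j) < \eta$, form $R := \{x_1, \ldots, x_m\} \cup Q$ and decompose
$$
\sum_{j=1}^{m-1} |f(x_{j+1}) - f(x_j)|^p = CI^\circ + E,
$$
where $CI^\circ$ collects the pairs $(x_j, x_{j+1})$ lying strictly inside a common sub-interval $(q_{i-1}, q_i)$ and $E$ collects all remaining pairs (those straddling some $q_i$ strictly, or involving a point of $P \cap Q$). Since $f \ge 0$ and $f(x_j), f(x_{j+1}) < \eta$, the elementary inequality $|f(x_{j+1}) - f(x_j)| \le \max(f(x_j), f(x_{j+1})) < \eta$ shows that each term of $E$ is strictly below $\eta^p$; elementary counting bounds the number of such terms by $O(N)$, whence $E = O(N\eta^p)$.

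For the main term $CI^\circ$, each sub-interval containing interior $P$-points $p_1 < \cdots < p_k$ contributes $B_i^L + CI_i + B_i^R$ to $S(R)$, with $B_i^L := |f(p_1) - f(q_{i-1})|^p$, $B_i^R := |f(q_i) - f(p_k)|^p$, and $CI_i := \sum_{j=1}^{k-1} |f(p_{j+1}) - f(p_j)|^p$; sub-intervals without interior $P$-points contribute the same $\Delta_i(Q) := |f(q_i) - f(q_{i-1})|^p$ to both $S(R)$ and $S(Q)$. Using $S(R) \le V$ together with the choice of $Q$,
$$
CI^\circ = [S(R) - S(Q)] + \sum_{i \in I^P}\bigl(\Delta_i(Q) - B_i^L - B_i^R\bigr) < (\varepsilon/4)^p + \sum_{i \in I^P}\bigl(\Delta_i(Q) - B_i^L - B_i^R\bigr).
$$
Applying Corollary~\ref{co:Shar} whenever $f(q_{i-1}) > 2\eta$ (and analogously for $q_i$) gives $B_i^L \ge f(q_{i-1})^p - C f(p_1) \ge f(q_{i-1})^p - C\eta$, while the trivial bound $B_i^{\cdot} \ge 0$ covers the vanishing boundary case. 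Combined with $\Delta_i(Q) \le \max(f(q_{i-1}), f(q_i))^p \le f(q_{i-1})^p + f(q_i)^p$ (valid for non-negative reals), a short four-case analysis on the possible values $f(q_{i-1}), f(q_i) \in \{0\} \cup (2\eta, 1]$ yields the uniform bound $\Delta_i(Q) - B_i^L - B_i^R \le 2C\eta$. Summing over $i \in I^P$ gives $CI^\circ < (\varepsilon/4)^p + 2NC\eta$, and the calibration of $\eta$ then yields $\sum |f(x_{j+1}) - f(x_j)|^p < \varepsilon^p$, as required. The principal obstacle is precisely this four-case analysis: the dichotomy built into the choice $\eta < M/2$ is what allows Corollary~\ref{co:Shar} to produce subtractive lower bounds on $B_i^{L/R}$ matching the upper bound on $\Delta_i(Q)$ up to an $O(\eta)$ error, and this matching is the sole reason for the constant $C = \max\{p,2\}\,2^{p-1}$ from the earlier elementary inequalities.
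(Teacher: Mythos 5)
Your argument is correct and is essentially the paper's own proof: compare against a near-optimal partition, refine by the admissible points, use Corollary~\ref{co:Shar} to lower-bound the boundary terms by $f(q_i)^p - C\eta$, absorb the straddling pairs via the $\eta^p$ bound, and calibrate $\eta$ against $N$. The only difference is cosmetic: your dichotomy $f(q_i)\in\{0\}\cup(2\eta,1]$ and the ensuing four-case analysis are unnecessary, since Corollary~\ref{co:Shar} applies unconditionally to all values in $[0,1]$ and the cancellation $\Delta_i(Q)-B_i^L-B_i^R\le 2C\eta$ follows in one line from $\Delta_i(Q)\le f(q_{i-1})^p+f(q_i)^p$.
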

\begin{proof}
Choose a partition $0 = y_1 < y_2 < \cdots < y_n = 1$ such that
\[
\sum_{k = 1}^{n- 1} |f(y_{k+ 1}) - f(y_k)|^p 
> 
\var(f,[0,1]) -\frac{\varepsilon^p}{2}.
\]
Set 
\[
\eta = \min\left\{1, \frac{\varepsilon^p}{n(p + 2)2^{p + 1}}\right\}.
\] 
Suppose \eqref{eq:Shar-6} holds. If $[y_k, y_{k + 1}]$ contains some of the 
points $x_1, \dots, x_m$, let
\[
j_k :=\min\{j : \ x_j \in [y_k, y_{k + 1}]\},
\quad
J_k :=\max\{j : \ x_j \in [y_k, y_{k + 1}]\}. 
\]
Note that since $f \ge 0$, one has
\[
(f(y_k))^p + (f(y_{k + 1}))^p 
\ge 
\left(\max\{f(y_k), f(y_{k + 1})\}\right)^p 
\ge 
|f(y_{k + 1}) - f(y_k)|^p .
\]
Then using Corollary \ref{co:Shar}, one gets
\begin{align*}
&
|f(x_{j_k}) 
- 
f(y_k)|^p + 
|f(x_{j_k + 1}) - f(x_{j_k})|^p + 
\cdots +  
|f(x_{J_k}) - f(x_{J_k - 1})|^p 
+ |f(y_{k + 1}) - f(x_{J_k})|^p 
\\
&\quad \ge 
(f(y_k))^p - \max\{p, 2\}\, 2^{p -1} f(x_{j_k}) + 
\sum_{j = j_k}^{J_k - 1} |f(x_{j + 1}) - f(x_j)|^p 
+ 
(f(y_{k + 1}))^p - \max\{p, 2\}\, 2^{p -1} f(x_{J_k}) 
\\
&\quad\ge  
|f(y_{k + 1}) - f(y_k)|^p - \max\{p, 2\}\, 2^p \eta + 
\sum_{j = j_k}^{J_k} |f(x_{j + 1}) - f(x_j)|^p - \eta^p 
\\
&\quad\ge 
|f(y_{k + 1}) - f(y_k)|^p - (p + 2)2^{p} \eta + 
\sum_{j = j_k}^{J_k} |f(x_{j + 1}) - f(x_j)|^p ,
\end{align*}
where we take $f(x_{m + 1}) = 0$ if $J_k = m$. In the last inequality 
above, we have used the following inequality
\[
\max\{p, 2\} + 1 \le p + 2.
\]
Summing over $k$ from $1$ to $n - 1$, one obtains
\begin{align*}
\var(f,[0,1]) 
\ge & 
\sum_{k = 1}^{n- 1} |f(y_{k+ 1}) - f(y_k)|^p - (n - 1) (p + 2)2^{p} \eta 
+ 
\sum_{j = 1}^{m - 1} |f(x_{j + 1}) - f(x_j)|^p 
\\
> & 
\var(f,[0,1]) - \frac{\varepsilon^p}{2} - \frac{\varepsilon^p}{2} + 
\sum_{j = 1}^{m - 1} |f(x_{j + 1}) - f(x_j)|^p ,
\end{align*}
which proves \eqref{eq:Shar-7}.
\qed
\end{proof}

We are now in a position to prove the main result of this section.
For $p=1$ the following lemma was proved in \cite[Lemma~2.1]{KT19}.
\begin{lemma}[Key lemma]
\label{le:Shar}
Let $1 \le p < \infty$ and $f \in BV_p[0, 1]$. For any $\eps > 0$ there exist 
$\delta > 0$ such that if
\[
0 \le x_1 < x_2 <\cdots < x_m \le 1 \quad\mbox{and}\quad 
|f(x_j)| < \delta \quad\mbox{for}\quad j \in\{1, \dots, m\} ,
\]
then
\[
\left(\sum_{j = 1}^{m - 1} |f(x_{j + 1}) - f(x_j)|^p\right)^{1/p} < \eps.
\]
\end{lemma}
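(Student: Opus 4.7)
The plan is to reduce the claim to the special case already settled in Lemma~\ref{le:Shar-4} by decomposing $f$ into four nonnegative pieces, each of which inherits a bound on its $p$-variation from that of $f$. A tempting shortcut---passing to $|f|$---is insufficient: $|f|$ can have arbitrarily small $p$-variation while $f$ itself oscillates wildly (for example, a function of constant modulus with rapidly varying argument), so a bound on differences of $|f|$ cannot control differences of $f$. The main obstacle is conceptual, namely recognizing the correct decomposition; once that is in place, the rest is straightforward bookkeeping with the elementary inequality $(s+t)^p \le 2^{p-1}(s^p + t^p)$.

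First, I would reduce from complex-valued to real-valued. Writing $f = u + iv$ with $u = \operatorname{Re} f$ and $v = \operatorname{Im} f$, one has $|u(a) - u(b)|, |v(a) - v(b)| \le |f(a) - f(b)|$, hence $u, v \in BV_p[0,1]$, and $|u(x_j)|, |v(x_j)| \le |f(x_j)|$. Combining $|f(a) - f(b)| \le |u(a) - u(b)| + |v(a) - v(b)|$ with $(s + t)^p \le 2^{p-1}(s^p + t^p)$ yields
\[
|f(a) - f(b)|^p \le 2^{p-1}\bigl(|u(a) - u(b)|^p + |v(a) - v(b)|^p\bigr).
\]
Next, I would reduce from real to nonnegative. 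Decomposing $u = u^+ - u^-$ and using the fact that $t \mapsto t^\pm$ is $1$-Lipschitz, one gets $u^\pm \in BV_p[0,1]$, both nonnegative, with $|u^\pm(x_j)| \le |u(x_j)|$. A second application of the same convexity estimate gives
\[
|u(a) - u(b)|^p \le 2^{p-1}\bigl(|u^+(a) - u^+(b)|^p + |u^-(a) - u^-(b)|^p\bigr),
\]
and likewise for $v$.

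To invoke Lemma~\ref{le:Shar-4}, set $M := \max\{\|f\|_\infty, 1\}$; the case $\|f\|_\infty = 0$ is trivial, so assume $M \ge 1$. Each of the four rescaled functions $u^+/M, u^-/M, v^+/M, v^-/M$ maps $[0,1]$ into $[0,1]$ and lies in $BV_p[0,1]$. I would apply Lemma~\ref{le:Shar-4} to each of them with tolerance $\eps/(4M)$ to obtain positive $\eta_1,\dots,\eta_4$, and set $\delta := M\min\{\eta_1,\dots,\eta_4\}$. If $|f(x_j)| < \delta$, then each of $u^\pm(x_j)/M, v^\pm(x_j)/M$ lies below the corresponding $\eta_i$, so rescaling back by $M$ yields the four strict estimates $\sum_j |g(x_{j+1}) - g(x_j)|^p < (\eps/4)^p$ for $g \in \{u^+, u^-, v^+, v^-\}$. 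Feeding these into the chain of the two $2^{p-1}$ inequalities from the previous paragraph produces a total constant $2^{p-1}\cdot 2^{p-1}\cdot 4 = 2^{2p}$ inside the $p$-th powers, whose $p$-th root is $4$; the four bounds $(\eps/4)^p$ therefore combine to give $\bigl(\sum_j |f(x_{j+1}) - f(x_j)|^p\bigr)^{1/p} < \eps$, as required.
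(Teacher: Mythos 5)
Your proof is correct and follows essentially the same route as the paper: reduce to four nonnegative pieces $u^{\pm}, v^{\pm}$ of $f/M$, apply Lemma~\ref{le:Shar-4} to each with tolerance $\eps/(4M)$, and take $\delta = M\min_l \eta_l$. The only (immaterial) difference is in the final recombination, where the paper uses the $\ell^p$ triangle inequality $\bigl(\sum_j|f\Delta_j|^p\bigr)^{1/p}\le\sum_{l=1}^4\bigl(\sum_j|w_l\Delta_j|^p\bigr)^{1/p}$ while you iterate the convexity bound $(s+t)^p\le 2^{p-1}(s^p+t^p)$; your constants do work out exactly to $\eps$.
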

\begin{proof}
There is nothing to prove if $f = 0$. So, we assume that $f \not=0$. 
Let $M := \|f\|_\infty$, $f_0 := \frac1M\, f$.
Let $u$ and $v$ be the real and the imaginary parts of $f_0$. Hence
$f_0 = u + iv$. Consider the functions
\[
w_1 = u_+ := \max\{u, 0\} = \frac{|u| + u}{2}, 
\quad
w_2 = u_- := (-u)_+ =  \frac{|u| - u}{2} = u_+ - u 
\]
and $w_3 = v_+$, $w_4 = v_-$.
Then $f_0 = w_1 - w_2 + i(w_3 - w_4)$ and 
\[
0 \le w_l \le \|f_0\|_\infty = 1  
\quad\mbox{for}\quad
l \in\{ 1, 2, 3, 4\}.
\]
Since $|a_+ - b_+| \le |a - b|$ for all $a, b \in \mathbb{R}$, one also has 
\[
\var(w_l,[0,1]) \le \var(f_0,[0,1]) = \frac{1}{M^p}\, \var(f,[0,1])
\quad\mbox{for}\quad
l \in\{ 1, 2, 3, 4\}.
\]
Take an arbitrary $\varepsilon > 0$. It follows from Lemma \ref{le:Shar-4} 
that for every $l\in\{ 1, 2, 3, 4\}$, there exists $\eta_l > 0$ such that
\[
0 \le x_1 < x_2 <\cdots < x_m \le 1 \quad\mbox{and}\quad w_l(x_j) < \eta_l , 
\quad 
j = 1, \dots, m
\]
imply
\[
\left(\sum_{j = 1}^{m - 1} |w_l(x_{j + 1}) - w_l(x_j)|^p\right)^{1/p} 
< 
\frac{\varepsilon}{4M}.
\]
Let $\eta := M \min\{\eta_l : \ l = 1, 2, 3, 4\}$. If
\[
0 \le x_1 < x_2 <\cdots < x_m \le 1 \quad\mbox{and}\quad |f(x_j)| < \eta , \ 
j = 1, \dots, m ,
\]
then
\[
w_l(x_j) < \frac1M\, \eta \le \eta_l , \ j = 1, \dots, m ,
\]
and it follows from the above that
\begin{align*}
\left(\sum_{j = 1}^{m - 1} |f(x_{j + 1}) - f(x_j)|^p\right)^{1/p} 
&= 
M \left(\sum_{j = 1}^{m - 1} |f_0(x_{j + 1}) - f_0(x_j)|^p\right)^{1/p} 
\\
&\le 
M \sum_{l = 1}^4 \left(
\sum_{j = 1}^{m - 1} |w_l(x_{j + 1}) - w_l(x_j)|^p
\right)^{1/p} 
<  
M \sum_{l = 1}^4 \frac{\varepsilon}{4M}= \varepsilon,
\end{align*}
which completes the proof.
\qed
\end{proof}
\section{Approximating in $BV_p[0,1]$ an arbitrary pair of functions 
by a pair of jointly nondegenerate  functions}
\label{sec:approximation-nondegenerated}
Let us start this section with two simple lemmas.
\begin{lemma}\label{le:discontinuities}
Let $1\le p<\infty$ and $f\in BV_p[0,1]$. Then $f$ possesses a limit from 
the left and from  the right at each point. Moreover $f$ has a most countably 
many discontinuities.
\end{lemma}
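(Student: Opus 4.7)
My plan is to reduce the statement to the classical fact that a bounded monotone function on $[0,1]$ has one-sided limits at every point and at most countably many discontinuities. To this end I would introduce the variation function
\[
V(x) := \var(f,[0,x]), \quad x\in[0,1],
\]
which is finite because $f\in BV_p[0,1]$.

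The key intermediate step is the pointwise inequality
\[
|f(y)-f(x)|^p \le V(y)-V(x), \quad 0\le x\le y\le 1.
\]
This is a consequence of superadditivity of the $p$-variation,
\[
V(y)\ge V(x) + \var(f,[x,y]) \quad\text{for } 0\le x\le y\le 1,
\]
which follows because any partition $P_1$ of $[0,x]$ concatenated with any partition $P_2$ of $[x,y]$ yields a partition of $[0,y]$ whose Wiener $p$-sum equals the sum of the two sub-Wiener $p$-sums; taking suprema over $P_1$ and $P_2$ produces the inequality. Applying it to the trivial partition $\{x,y\}$ of $[x,y]$ then gives the pointwise bound.

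With this inequality in hand, the lemma follows quickly. The nondecreasing bounded function $V$ admits one-sided limits $V(x_0^{\pm})$ at every $x_0\in[0,1]$ and has at most countably many discontinuities. Given $\eps>0$, one has $|V(y)-V(y')|<\eps^p$ for $y,y'$ close enough to $x_0$ from the right; the pointwise inequality then gives $|f(y)-f(y')|<\eps$ in the same range, so the Cauchy criterion yields existence of $f(x_0^{+})$. The argument from the left is identical. Moreover, the same inequality shows that $f$ is continuous at every point where $V$ is, so the set of discontinuities of $f$ is contained in that of $V$ and is therefore at most countable.

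The only conceptually delicate point is that for $p>1$ the $p$-variation is not additive under interval concatenation---inserting a point can strictly decrease the Wiener $p$-sum---so one has superadditivity rather than equality. Fortunately, superadditivity is exactly what the argument needs.
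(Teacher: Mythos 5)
Your proof is correct: superadditivity of the Wiener $p$-variation under concatenation of partitions gives $|f(y)-f(x)|^p\le V(y)-V(x)$ for the bounded nondecreasing function $V(x)=\var(f,[0,x])$, and the existence of one-sided limits and the countability of the discontinuities of $f$ then follow from the corresponding classical facts for $V$ exactly as you argue. The paper does not actually spell out a proof (it only points to the standard argument from the $p=1$ literature), and that standard argument is precisely this reduction to the monotone variation function, including your correct caveat that for $p>1$ one only has superadditivity rather than additivity --- which is all the argument needs.
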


This statement can be proved as in the case $p=1$ (see, e.g., 
\cite[Proposition~1.32 and Corollary~1.33]{D98}).
\begin{lemma}\label{le:Karl}
Let $1\le p<\infty$, $\rho > 0$, and $f:(a,b)\to\C$ be such that
\[
\inf_{x\in(a,b)}|f(x)| < \rho .
\]
Then
\begin{equation}\label{eq:Karl-1}
\sup_{x\in(a,b)}|f(x)|
\le
\rho + \sup_{[\alpha,\beta]\subset(a,b)}
\big(\var(f,[\alpha,\beta])\big)^{1/p}.
\end{equation}
\end{lemma}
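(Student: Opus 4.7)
The plan is to reduce the bound to a single application of the definition of the $p$-variation, using the hypothesis to pick a reference point where $|f|$ is already controlled by $\rho$.

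First, I would observe that the hypothesis $\inf_{x\in(a,b)}|f(x)|<\rho$ forces the existence of a point $y\in(a,b)$ with $|f(y)|<\rho$; otherwise every value $|f(x)|$ would be at least $\rho$ and so would the infimum. Fix such a $y$ once and for all.

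Next, for an arbitrary $x\in(a,b)$ with $x\ne y$, set $\alpha:=\min(x,y)$, $\beta:=\max(x,y)$. Since $x,y\in(a,b)$, one has $[\alpha,\beta]\subset(a,b)$, so $[\alpha,\beta]$ is an admissible segment in the supremum on the right-hand side of \eqref{eq:Karl-1}. Consider the two-point partition $P=\{\alpha,\beta\}\in\mathcal{P}[\alpha,\beta]$: with respect to it the Wiener $p$-variation of $f$ is exactly $|f(x)-f(y)|^p$. Therefore
\[
|f(x)-f(y)|
=\big(\var(f,P,[\alpha,\beta])\big)^{1/p}
\le\big(\var(f,[\alpha,\beta])\big)^{1/p}
\le\sup_{[\alpha,\beta]\subset(a,b)}\big(\var(f,[\alpha,\beta])\big)^{1/p}.
\]
Combining this with $|f(x)|\le|f(y)|+|f(x)-f(y)|$ and $|f(y)|<\rho$ yields
\[
|f(x)|<\rho+\sup_{[\alpha,\beta]\subset(a,b)}\big(\var(f,[\alpha,\beta])\big)^{1/p}.
\]
The remaining case $x=y$ is immediate since $|f(x)|=|f(y)|<\rho$.

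Finally, I would take the supremum over $x\in(a,b)$ on the left-hand side to obtain \eqref{eq:Karl-1} (the strict inequality for each individual $x$ relaxes to a non-strict inequality in the supremum). I do not foresee any real obstacle: the whole argument is essentially the observation that the two-point partition already realises a term of the variation, so a single triangle inequality together with the choice of the reference point $y$ suffices. The only detail worth stating explicitly is that $[\alpha,\beta]\subset(a,b)$, which guarantees the legitimacy of passing to the supremum on the right-hand side of \eqref{eq:Karl-1}.
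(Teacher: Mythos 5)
Your proof is correct and follows essentially the same route as the paper's: fix a reference point where $|f|<\rho$, bound $|f(x)-f(y)|$ by the $p$-variation over the segment joining $x$ and $y$, and apply the triangle inequality. The only cosmetic difference is that you justify the step $|f(x)-f(y)|\le(\var(f,[\alpha,\beta]))^{1/p}$ directly via the two-point partition, while the paper cites it from the literature.
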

\begin{proof}
There exists $x_0\in(a,b)$ such that $|f(x_0)|<\rho$. 
Consider an arbitrary $x\in(a,b)$. Let $I_x\subset(a,b)$ be the segment with
the endpoints $x$ and $x_0$. By \cite[Proposition~1.32(c)]{ABM14},
\[
|f(x)-f(x_0)|
\le 
\big(\var(f,I_x)\big)^{1/p}
\le 
\sup_{[\alpha,\beta]\subset(a,b)}
\big(\var(f,[\alpha,\beta])\big)^{1/p}.
\]
Hence
\begin{align*}
|f(x)|
\le
|f(x_0)|+\sup_{[\alpha,\beta]\subset(a,b)}
\big(\var(f,[\alpha,\beta])\big)^{1/p}
< 
\rho+\sup_{[\alpha,\beta]\subset(a,b)}
\big(\var(f,[\alpha,\beta])\big)^{1/p}.
\end{align*}
Since $x\in(a,b)$ is arbitrary, 
\[
\sup_{x\in(a,b)}|f(x)|
\le \rho +
\sup_{[\alpha,\beta]\subset(a,b)}
\big(\var(f,[\alpha,\beta])\big)^{1/p},
\]
which completes the proof.
\qed
\end{proof}

The next theorem says that an arbitrary pair of functions in 
$(BV_p[0,1])^2$ can be approximated by a 
pair of jointly nondegenerate  functions with the same product.
\begin{theorem}\label{th:nondegenerated}
Suppose that $1\le p<\infty$. For every $\eps>0$ and every pair 
of functions $(F,G)\in (BV_p[0,1])^2$ there is a pair of jointly 
nondegenerate  functions $(F_1,G_1)\in (BV_p[0,1])^2$ such that 
$F\cdot G=F_1\cdot G_1$ and
\[
\|F-F_1\|_{BV_p}<\eps,
\quad
\|G-G_1\|_{BV_p}<\eps.
\]
\end{theorem}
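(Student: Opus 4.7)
The plan, following the scheme of \cite[Theorem~2.2]{KT19} for $p=1$, is to modify $F$ and $G$ only on the set where both are small in modulus, while keeping their pointwise product unchanged. First, I would apply the Key Lemma (Lemma~\ref{le:Shar}) to $F$ and to $G$ with a suitable auxiliary parameter $\tilde{\eps} > 0$ (a small fraction of $\eps$), obtaining a common $\delta \in (0, \tilde\eps)$ such that whenever $0 \le x_1 < \cdots < x_m \le 1$ satisfies $|F(x_j)| < \delta$ (respectively $|G(x_j)| < \delta$) for every $j$, one has $\big(\sum_{j=1}^{m-1} |F(x_{j+1}) - F(x_j)|^p\big)^{1/p} < \tilde\eps$ (respectively the analogous estimate for $G$).

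Next, I would define the \emph{bad set} $B := \{x \in [0,1] : |F(x)| < \delta \text{ and } |G(x)| < \delta\}$, on which $|F(x) G(x)| < \delta^2$. Using Lemma~\ref{le:discontinuities} to supply one-sided limits of $F$ and $G$ together with the countability of their discontinuity sets, one can decompose $B$ (after possibly enlarging by a countable exceptional set) into a pairwise disjoint family of intervals $\{(a_k, b_k)\}_k$; at each endpoint at least one of $|F|$, $|G|$ or an appropriate one-sided limit attains the value $\delta$. Setting $F_1 := F$ and $G_1 := G$ off of $\bigcup_k (a_k, b_k)$, one defines them on each bad interval $(a_k, b_k)$ so that $F_1 \cdot G_1 = F \cdot G$ there and $|F_1(x)| \ge \delta$, whence $|G_1(x)| = |F(x)G(x)|/|F_1(x)| < \delta$ and $|F_1| + |G_1| \ge \delta$ on all of $[0,1]$.

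The concrete recipe, in the spirit of \cite{KT19}, is to split each $(a_k, b_k)$ at a midpoint $c_k$, take $F_1$ to be the unimodular constant $\delta\, F(a_k)/|F(a_k)|$ on $(a_k, c_k]$ and $\delta\, F(b_k)/|F(b_k)|$ on $(c_k, b_k)$ (with obvious modifications in the degenerate cases where $F$ vanishes at an endpoint while $G$ does not), and to define $G_1 := FG/F_1$ on $(a_k, b_k)$. The resulting pair $(F_1, G_1)$ is jointly nondegenerate with $F_1 G_1 = FG$ by construction, and the $\infty$-norm bounds $\|F_1 - F\|_\infty \le 2\delta$, $\|G_1 - G\|_\infty \le 2\delta$ are immediate.

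The main obstacle is estimating the $p$-variation of $F_1 - F$ (and symmetrically of $G_1 - G$). On each $(a_k, b_k)$, this $p$-variation splits into two contributions: (a) the interior contributions where $F_1$ is piecewise constant, so that $F_1 - F$ has the same $p$-variation on $(a_k, c_k]$ and on $(c_k, b_k)$ as $F$ itself, and (b) a single jump at $c_k$ of magnitude at most $|F(a_k) - F(b_k)|$. Summed over $k$, the interior pieces are dominated by the supremum of $\sum_{j} |F(x_{j+1}) - F(x_j)|^p$ taken over partitions lying entirely in $\bigcup_k (a_k, b_k) \subset B$, and are therefore bounded by $\tilde\eps^p$ via Lemma~\ref{le:Shar}; the jumps at the $c_k$ are handled by selecting interior approximations $a_k', b_k' \in (a_k, b_k)$ with $|F(a_k')|, |F(b_k')| < \delta$, applying the Key Lemma to the enumerated family $\{a_k', b_k'\}_k$, and then transferring the estimate to the original endpoints through one-sided limits. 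Whereas for $p=1$ in \cite{KT19} one simply adds the pieces, for general $p \ge 1$ the aggregation of interior variation and boundary jumps over the possibly infinite collection of bad intervals must be carried out in an $\ell^p$-sense; this is precisely what the extension of the Key Lemma in Section~\ref{sec:key-lemma} enables. Choosing $\tilde\eps$ small enough (depending on $\eps$) and then $\delta$ accordingly yields $\|F_1 - F\|_{BV_p} < \eps$ and $\|G_1 - G\|_{BV_p} < \eps$.
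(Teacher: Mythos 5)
Your overall strategy (modify the pair only where both functions are small, keep the product fixed, control the new variation via the Key Lemma) is the right one and matches the paper's, but the construction as described has a genuine gap: you modify $F$ on \emph{every} component $(a_k,b_k)$ of the bad set, and this family can be infinite. On each such component you replace $F$ by a function of constant modulus $\delta$, while $|F|$ itself may be much smaller than $\delta$ throughout the component --- indeed $F$ may vanish identically there, the interval being a component of $B$ only because $|G|$ reaches $\delta$ at its endpoints. Then $F_1-F$ acquires, at the left endpoint of each such component, a jump of size at least $\delta-\sup_{(a_k,b_k)}|F|$, which is bounded below by a fixed positive constant for infinitely many $k$; hence $\var(F_1-F,[0,1])=\infty$. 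A concrete instance: $F\equiv 0$ and $G$ oscillating between $\delta(1-2^{-k})$ and $\delta(1+2^{-k})$ near a sequence of points lies in $(BV_p[0,1])^2$ but produces infinitely many components of $B$. The Key Lemma cannot rescue this, because the offending jumps belong to the \emph{new} function $F_1$, not to $F$; relatedly, your bound ``jump at $c_k$ of magnitude at most $|F(a_k)-F(b_k)|$'' is false, since radial projection onto the circle of radius $\delta$ expands distances for points of small modulus (take $F(a_k)=10^{-6}$, $F(b_k)=-10^{-6}$: the jump of $F_1$ at $c_k$ is $2\delta$). The paper avoids all of this by a preliminary finiteness reduction: among the components of the interior of $\{|F|+|G|<\eta\}$, only those on which $\inf(|F|+|G|)$ is small (below $\eta/2$, and then below a further threshold $\rho$) are modified, and a $p$-variation argument (each such component forces an oscillation of $|F|+|G|$ of size at least $\eta/2$ against a point outside the set) shows there are only finitely many of them; on the remaining components $|F|+|G|$ is already bounded below and nothing is changed.

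A second, smaller gap: the bad set is defined by strict inequalities on possibly discontinuous functions, so it need not be open, and its points that do not lie in its interior cannot be ``absorbed into intervals.'' At an isolated point where $F$ and $G$ both vanish, your $F_1,G_1$ coincide with $F,G$ and joint nondegeneracy fails; an infinite sequence of isolated points where $|F|+|G|$ is positive but tends to $0$ would likewise defeat the infimum. The paper handles this with a separate step: the exceptional boundary points with $|f|+|g|<\eta/2$ form a finite set (again by a $p$-variation argument), and $F_1$ is redefined only at the finitely many points where $|f|+|g|=0$. Both finiteness reductions need to be incorporated before your norm estimates can go through.
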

\begin{proof}
The idea of the proof is borrowed from the proof of \cite[Theorem~2.2]{KT19}.
Fix $\eps>0$. By Lemma~\ref{le:Shar}, we can find some $\delta>0$ such that
for every partition
\[
0\le x_1<x_2<\dots<x_m\le 1,
\]
we have
\begin{equation}\label{eq:nondeg-1}
|F(x_j)|<\delta\mbox{ for }
j\in\{1,\dots,m\}
\quad\Rightarrow\quad
\left(\sum_{j=1}^{m-1}|F(x_{j+1})-F(x_j)|^p\right)^{1/p}<\frac{\eps}{48}
\end{equation}
and
\begin{equation}\label{eq:nondeg-2}
|G(x_j)|<\delta\mbox{ for }
j\in\{1,\dots,m\}
\quad\Rightarrow\quad
\left(\sum_{j=1}^{m-1}|G(x_{j+1})-G(x_j)|^p\right)^{1/p}<\frac{\eps}{48}.
\end{equation}
Take 
\begin{equation}\label{eq:nondeg-3}
\eta:=\min\left\{\delta,\frac{\eps}{24},
\frac{1}{2}\sup_{x\in[0,1]}\big(|F(x)|+|G(x)|\big)\right\}.
\end{equation}
By the representation theorem for open sets on the real line (see, e.g.,
\cite[Theorem~3.11]{A74}), the interior of the set
$\{x\in[0,1]:|F(x)|+|G(x)|<\eta\}$ is the union of at most countable
collection of disjoint open intervals. Let $A_0$ be the collection of those
open intervals $U=(a,b)$, $a<b$, in this union such that
\[
\inf_{x\in U}\big(|F(x)|+|G(x)|\big) < \frac{\eta}{2}\, .
\]
We claim that there are only finitely many intervals in $A_0$. Indeed, assume
the contrary:
\[
A_0 =\big\{U_i=(a_i,b_i)\ :\ i\in\N,\ a_i<b_i\big\}.
\]
Without loss of generality, we can assume that $b_i\le a_{i+1}$ for every
$i\in\N$. Let $H:=|F|+|G|$. By the definition of the infimum, for every
$i\in\N$, there exists $x_i\in(a_i,b_i)$ such that $H(x_i)<\eta/2$.
On the other hand, there is at least one point $y_i$ such that
$b_i\le y_i\le a_{i+1}$ and $H(y_i)\ge\eta$. Hence
\[
\var(H,[0,1])
\ge 
\sum_{i=1}^\infty |H(y_i)-H(x_i)|^p
\ge
\sum_{i=1}^\infty\left(\eta-\frac{\eta}{2}\right)^p=+\infty,
\]
which is impossible since $H=|F|+|G|\in BV_p[0,1]$. Thus, for some $N\in\N$,
we have
\[
A_0=\big\{(a_1,b_1),\dots,(a_N,b_N)\big\}.
\]
Let 
\begin{equation}\label{eq:nondeg-4}
\rho := \min\left\{\frac{\eta}{2}\,, \frac{\eps}{48N}\right\}
\end{equation}
and let $A$ be the part of $A_0$ consisting of the intervals $(a_i,b_i)$ 
such that
\begin{equation}\label{eq:nondeg-5}
\inf_{x\in (a_i,b_i)}\big(|F(x)|+|G(x)|\big) < \rho .
\end{equation}
Relabelling $(a_i,b_i) \in A$ if necessary, we can assume
\[
A=\big\{(a_1,b_1),\dots,(a_n,b_n)\big\} ,
\]
where $n \le N$.

For $i\in\{1,\dots,n\}$, put
\begin{equation}\label{eq:nondeg-6}
c_i:=\max\left\{\sup_{x\in(a_i,b_i)}|F(x)|,\frac{\eps}{24n}\right\},
\quad
d_i:=\max\left\{\sup_{x\in(a_i,b_i)}|G(x)|,\frac{\eps}{24n}\right\}.
\end{equation}
It follows from definitions \eqref{eq:nondeg-6}, \eqref{eq:nondeg-3}
and the definition of the collection $A$ that 
\begin{equation}\label{eq:nondeg-6*}
\max_{1\le i\le n}\max\{c_i,d_i\}\le\frac{\eps}{24}.
\end{equation}
Taking into account the definition of the collection $A$
and \eqref{eq:nondeg-3},
we see that for every $i\in\{1,\dots,n\}$, every interval
$[\alpha,\beta]\subset (a_i,b_i)$ and every its partition
$\alpha=x_1<\dots<x_m=\beta$, one has $|F(x_j)|<\delta$
and $|G(x_j)|<\delta$ for $j\in\{1,\dots,m\}$.
Then \eqref{eq:nondeg-1}--\eqref{eq:nondeg-2} imply that
\begin{align}
&
\sum_{i=1}^n \sup_{[\alpha,\beta]\subset(a_i,b_i)}
\var(F,[\alpha,\beta])\le\left(\frac{\eps}{48}\right)^p,
\label{eq:nondeg-7}
\\
&
\sum_{i=1}^n \sup_{[\alpha,\beta]\subset(a_i,b_i)}
\var(G,[\alpha,\beta])\le\left(\frac{\eps}{48}\right)^p.
\label{eq:nondeg-8}
\end{align}
It follows from Lemma~\ref{le:Karl}, definition \eqref{eq:nondeg-4},
estimates \eqref{eq:nondeg-7}--\eqref{eq:nondeg-8}, and the inequality
\begin{equation}\label{eq:nondeg-9}
(t + \tau)^p \le 2^{p - 1} \left(t^p + \tau^p\right) , \quad t, \tau \ge 0
\end{equation} 
that
\begin{align}
\sum_{i=1}^n \left(\sup_{x\in(a_i,b_i)}|F(x)|\right)^p
&\le 
\sum_{i=1}^n \left(\rho + \sup_{[\alpha,\beta]\subset(a_i,b_i)}
(\var(F,[\alpha,\beta]))^{1/p}\right)^p 
\nonumber \\
&\le 
\sum_{i=1}^n 2^{p - 1} \left(\left(\frac{\eps}{48N}\right)^p + 
\sup_{[\alpha,\beta]\subset(a_i,b_i)}
\var(F,[\alpha,\beta])\right)
\le 
\left(\frac{\eps}{24}\right)^p,
\label{eq:nondeg-10}
\end{align}
and
$$
\sum_{i=1}^n \left(\sup_{x\in(a_i,b_i)}|G(x)|\right)^p
\le
\left(\frac{\eps}{24}\right)^p.
$$
Combining \eqref{eq:nondeg-6} and \eqref{eq:nondeg-10}, we see 
that
\begin{align}
\left(\sum_{i=1}^n c_i^p\right)^{1/p}
&\le 
\left(\sum_{i=1}^n \left(\sup_{x\in(a_i,b_i)}|F(x)|\right)^p
+
\sum_{i=1}^n\left(\frac{\eps}{24n}\right)^p
\right)^{1/p}
\nonumber\\
&\le 
\left(
\left(\frac{\eps}{24}\right)^p+n\left(\frac{\eps}{24n}\right)^p
\right)^{1/p}
\le 
\frac{\eps}{24}+\frac{\eps}{24}=\frac{\eps}{12}
\label{eq:nondeg-11}
\end{align}
and, similarly,
\begin{equation}\label{eq:nondeg-12}
\left(\sum_{i=1}^n d_i^p\right)^{1/p} \le \frac{\eps}{12}.
\end{equation}
Define $f,g:[0,1]\to\F\in\{\R,\C\}$ by
\begin{align}
f(x)&:=\left\{\begin{array}{lll}
F(x), & \displaystyle x\notin \bigcup_{i=1}^n (a_i,b_i),
\\[3mm]
c_i+d_i, & x\in(a_i,b_i), & i\in\{1,\dots,n\},
\end{array}\right.
\label{eq:nondeg-13}
\\
g(x)&:=\left\{\begin{array}{lll}
G(x), & \displaystyle x\notin \bigcup_{i=1}^n (a_i,b_i),
\\[3mm]
\displaystyle\frac{F(x)G(x)}{c_i+d_i}, 
& x\in(a_i,b_i), & i\in\{1\,\dots,n\}.
\end{array}\right.
\label{eq:nondeg-14}
\end{align}
It follows from \eqref{eq:nondeg-6}--\eqref{eq:nondeg-6*} 
and \eqref{eq:nondeg-13} that
\begin{equation}\label{eq:nondeg-15}
\|F-f\|_\infty 
=
\max_{1\le i\le n}\sup_{x\in(a_i,b_i)}|F(x)-(c_i+d_i)|
<
\max_{1\le i\le n}2(c_i+d_i)
\le 
2\left(\frac{\eps}{24}+\frac{\eps}{24}\right)=\frac{\eps}{6}
\end{equation}
and
\begin{align}
\var(F-f,[0,1])
\le & 
\sum_{i=1}^n \sup_{[\alpha,\beta]\subset(a_i,b_i)}
\var(F-(c_i+d_i),[\alpha,\beta])
\nonumber\\
&+
\sum_{i=1}^n \lim_{x\to a_i^+}|F(x)-(c_i+d_i)|^p
+
\sum_{i=1}^n \lim_{x\to b_i^-}|F(x)-(c_i+d_i)|^p
\nonumber\\
\le & 
\sum_{i=1}^n \sup_{[\alpha,\beta]\subset(a_i,b_i)}
\var(F,[\alpha,\beta])
+
2\sum_{i=1}^n\sup_{x\in(a_i,b_i)}\big(|F(x)|+c_i+d_i\big)^p
\nonumber\\
<&
\sum_{i=1}^n \sup_{[\alpha,\beta]\subset(a_i,b_i)}
\var(F,[\alpha,\beta])
+
4^p\sum_{i=1}^n (c_i+d_i)^p.
\label{eq:nondeg-16}
\end{align}
Combining \eqref{eq:nondeg-15}--\eqref{eq:nondeg-16} with
\eqref{eq:nondeg-7} and \eqref{eq:nondeg-11}--\eqref{eq:nondeg-12}, 
we see that
\begin{align}
\|F-f\|_{BV_p}
&=
\|F-f\|_\infty +\big(\var(F-f,[0,1])\big)^{1/p}
< 
\frac{\eps}{6}+\left(\left(\frac{\eps}{48}\right)^p
+
4^p\sum_{i=1}^n(c_i+d_i)^p\right)^{1/p}
\nonumber\\
&\le 
\frac{\eps}{6}+\frac{\eps}{48}
+4\left(\sum_{i=1}^n c_i^p\right)^{1/p}
+4\left(\sum_{i=1}^n d_i^p\right)^{1/p}
< 
\frac{\eps}{6}+\frac{\eps}{24}+\frac{\eps}{3}+\frac{\eps}{3}
=\frac{7\eps}{8}.
\label{eq:nondeg-17}
\end{align}
Analogously, it follows from \eqref{eq:nondeg-6}--\eqref{eq:nondeg-6*}
and \eqref{eq:nondeg-14} that
\begin{align}
\|G-g\|_\infty 
&=
\max_{1\le i \le n}\sup_{x\in(a_i,b_i)}
\left|G(x)-\frac{F(x)G(x)}{c_i+d_i}\right| 
\nonumber\\
&\le 
\max_{1\le i\le n}\left(
\sup_{x\in(a_i,b_i)}|G(x)|+
\sup_{x\in(a_i,b_i)}|G(x)|\sup_{x\in(a_i,b_i)}\frac{|F(x)|}{c_i+d_i}
\right)
\nonumber\\
&\le
\max_{1\le i\le n}\left(d_i+\frac{d_i\cdot c_i}{c_i+d_i}\right) 
<
2\max_{1\le i\le n} d_i
\le\frac{2\eps}{24}=\frac{\eps}{12}.
\label{eq:nondeg-18}
\end{align}
If $i\in\{1,\dots,n\}$ and $[\alpha,\beta]\subset(a_i,b_i)$,
then taking into account inequality \eqref{eq:nondeg-9}
and definitions \eqref{eq:nondeg-6}, we get
\begin{align}
&
\var\left(G\left(1-\frac{F}{c_i+d_i}\right),[\alpha,\beta]\right)
\nonumber\\
&\quad\le 
2^{p - 1} \left\{
\sup_{x\in[\alpha,\beta]}|G(x)|^p
\cdot
\var\left(1-\frac{F}{c_i+d_i},[\alpha,\beta]\right)
+
\var(G,[\alpha,\beta])
\cdot
\sup_{x\in[\alpha,\beta]}
\left|1-\frac{F(x)}{c_i+d_i}\right|^p
\right\}
\nonumber\\
&\quad\le 
2^p\left\{
\left(\frac{\displaystyle\sup_{x\in(a_i,b_i)}|G(x)|}{c_i+d_i}\right)^p\cdot
\var(F,[\alpha,\beta])
+
\var(G,[\alpha,\beta])\cdot 
\left(1+\frac{\displaystyle\sup_{x\in(a_i,b_i)}|F(x)|}{c_i+d_i}\right)^p
\right\}
\nonumber\\
&\quad\le
2^p\left\{
\left(\frac{d_i}{c_i+d_i}\right)^p\var(F,[\alpha,\beta])
+\var(G,[\alpha,\beta])\left(1+\frac{c_i}{c_i+d_i}\right)^p
\right\}
\nonumber\\
&\quad\le 
2^p\var(F,[\alpha,\beta])+4^p\var(G,[\alpha,\beta]).
\label{eq:nondeg-19}
\end{align}
Further, definitions \eqref{eq:nondeg-6} imply that for
$i\in\{1,\dots,n\}$,
\begin{align}
&
\lim_{x\to a_i^+}
\left|G(x)\left(1-\frac{F(x)}{c_i+d_i}\right)\right|^p
+
\lim_{x\to b_i^-}
\left|G(x)\left(1-\frac{F(x)}{c_i+d_i}\right)\right|^p
\nonumber\\
&\quad\le
2\sup_{x\in(a_i,b_i)}|G(x)|^p
\cdot 
\sup_{x\in(a_i,b_i)}\left|1-\frac{F(x)}{c_i+d_i}\right|^p
\le
2d_i^p\left(1+\frac{c_i}{c_i+d_i}\right)^p
\le
2^{p+1}d_i^p\le 4^pd_i^p.
\label{eq:nondeg-20}
\end{align}
It follows from \eqref{eq:nondeg-19}--\eqref{eq:nondeg-20} that
\begin{align}
\var(G-g,[0,1])
\le & 
\sum_{i=1}^n \sup_{[\alpha,\beta]\subset(a_i,b_i)} 
\var\left(G\left(1-\frac{F}{c_i+d_i}\right),[\alpha,\beta]\right) 
\nonumber\\
&+
\sum_{i=1}^n \lim_{x\to a_i^+}
\left|G(x)\left(1-\frac{F(x)}{c_i+d_i}\right)\right|^p
+
\sum_{i=1}^n \lim_{x\to b_i^-}
\left|G(x)\left(1-\frac{F(x)}{c_i+d_i}\right)\right|^p
\nonumber\\
\le & 
2^p \sum_{i=1}^n 
\sup_{[\alpha,\beta]\subset(a_i,b_i)}\var(F,[\alpha,\beta])
+
4^p\sum_{i=1}^n 
\sup_{[\alpha,\beta]\subset(a_i,b_i)}\var(G,[\alpha,\beta]) 
+
4^p\sum_{i=1}^n d_i^p.
\label{eq:nondeg-21}
\end{align}
Combining \eqref{eq:nondeg-18} and \eqref{eq:nondeg-21} with 
\eqref{eq:nondeg-7}--\eqref{eq:nondeg-8} and \eqref{eq:nondeg-12}, we see that
\begin{align}
\|G-g\|_{BV_p}
&=
\|G-g\|_\infty+\big(\var(G-g,[0,1])\big)^{1/p}
< 
\frac{\eps}{12}+\left(
2^p\left(\frac{\eps}{48}\right)^p
+
4^p\left(\frac{\eps}{48}\right)^p
+
4^p\sum_{i=1}^n d_i^p
\right)^{1/p}
\nonumber\\
&\le 
\frac{\eps}{12}+\frac{\eps}{24}+\frac{\eps}{12}+
4\left(\sum_{i=1}^n d_i^p\right)^{1/p}
<
\frac{\eps}{4}+\frac{\eps}{3}<\eps.
\label{eq:nondeg-22}
\end{align}
It follows from \eqref{eq:nondeg-17} and \eqref{eq:nondeg-22}
that $f,g\in BV_p[0,1]$, whence 
\[
h:=|f|+|g|\in BV_p[0,1].
\]
In view of Lemma~\ref{le:discontinuities}, the set $J$ of jumps of $h$
is at most countable. Let $\partial S$ and $\operatorname{int}(S)$ 
denote the boundary and the interior of a set 
$S\subset[0,1]$, respectively. Consider the sets
\[
S_\eta :=\{x\in[0,1]: h(x)<\eta\},
\quad
B_\eta:=\{x\in[0,1]: h(x)\ge\eta\}.
\]
Note that in view of the choice of $\eta$ in \eqref{eq:nondeg-3},
the set $B_\eta$ is nonempty. Then we have
$\partial(S_\eta)\setminus J\subset B_\eta$. Consider the set
\[
J_\eta:=\partial (S_\eta)\setminus B_\eta \subset J.
\]
We have
\begin{equation}\label{eq:nondeg-23}
[0,1]=B_\eta\cup S_\eta=B_\eta\cup\operatorname{int}(S_\eta)\cup J_\eta,
\end{equation}
where the sets $B_\eta$, $\operatorname{int}(S_\eta)$ and $J_\eta$ are 
pairwise disjoint. 

We claim that the set
\[
J_\eta^s:=\{y\in J_\eta: h(y)<\eta/2\}
\]
is finite. Indeed, since $J_\eta^s\subset J_\eta\subset J$, the set
$J_\eta^s$ is at most countable. Assume the contrary, that is, that the
set $J_\eta ^s$ is infinite. Let $J_\eta^s=\{y_j\}_{j=1}^\infty$
and $y_j<y_{j+1}$ for all $j\in\N$. Then for every $j\in\N$,  there
exists $x_j\in B_\eta$ such that $y_{2j-1}<x_j<y_{2j+1}$. Therefore
\[
\var(h,[0,1])\ge \sum_{j=1}^\infty |h(x_j)-h(y_{2j-1})|^p
\ge \sum_{j=1}^\infty\left(\eta-\frac{\eta}{2}\right)^p=+\infty,
\]
which is impossible since $h\in BV_p[0,1]$. Thus, the set $J_\eta^s$
is finite.

Consider the (obviously, finite)  set
\[
J_\eta^0:=\{y\in J_\eta^s:h(y)=0\}.
\]
Let $k$ be the cardinality of $J_\eta^0$. Define the functions 
$F_1,G_1:[0,1]\to\F\in\{\R,\C\}$ by
\begin{equation}\label{eq:nondeg-24}
F_1(x):=\left\{\begin{array}{ll}
f(x), & x\in[0,1]\setminus J_\eta^0,
\\[3mm]
\displaystyle\frac{\eps}{24 k}, & x\in J_\eta^0,
\end{array}\right.
\end{equation}
and
\begin{equation}\label{eq:nondeg-25}
G_1(x) :=g(x), \quad x\in[0,1].
\end{equation} 
It is clear that 
\begin{equation}\label{eq:nondeg-26}
f(x)=g(x)=0,
\quad
x\in J_\eta^0.
\end{equation}
It follows from \eqref{eq:nondeg-13}--\eqref{eq:nondeg-14}
and \eqref{eq:nondeg-24}--\eqref{eq:nondeg-26} that
\begin{equation}\label{eq:nondeg-27}
F(x)G(x)=f(x)g(x)=F_1(x)G_1(x),
\quad x\in[0,1].
\end{equation}
Moreover,
\begin{align}
\|F_1-f\|_{BV_p}
=
\|F_1-f\|_\infty + \big(\var(F_1-f,[0,1]\big)^{1/p}
=
\frac{\eps}{24k}+\left(2k\left(\frac{\eps}{24k}\right)^p\right)^{1/p}
\le\frac{2k+1}{24k}\eps
\le\frac{\eps}{8}.
\label{eq:nondeg-28}
\end{align}
Combining \eqref{eq:nondeg-17} and \eqref{eq:nondeg-28},
we arrive at the following:
\begin{equation}\label{eq:nondeg-29}
\|F-F_1\|_{BV_p}
\le 
\| F-f\|_{BV_p}+\|f-F_1\|_{BV_p}
<\frac{7\eps}{8}+\frac{\eps}{8}=\eps.
\end{equation}
In view of \eqref{eq:nondeg-22} and \eqref{eq:nondeg-25},
we have
\begin{equation}\label{eq:nondeg-30}
\|G-G_1\|_{BV_p}=\|G-g\|_{BV_p}<\eps.
\end{equation}
For a set $S\subset[0,1]$, let
\[
I(S):=\inf_{x\in S}\big(|F_1(x)|+|G_1(x)|\big).
\]
Then it follows from \eqref{eq:nondeg-24}--\eqref{eq:nondeg-26}
that
\begin{align}
I_1
&:=
I(B_\eta)=\inf_{x\in B_\eta}\big(|f(x)|+|g(x)|\big)\ge\eta>0,
\label{eq:nondeg-31}
\\
I_2
&:=I(J_\eta\setminus J_\eta^s) 
=
\inf_{y\in J_\eta\setminus J_\eta^s}
\big(|f(y)|+|g(y)|\big) \ge \frac{\eta}{2} > 0 ,
\label{eq:nondeg-32}
\\
I_3
&:=I(J_\eta^s\setminus J_\eta^0) 
=
\min_{y\in J_\eta^s\setminus J_\eta^0}\big(|f(y)|+|g(y)|\big)>0
\label{eq:nondeg-33}
\end{align}
(recall that the set $J_\eta^s\setminus J_\eta^0$ is finite), and
\begin{equation}\label{eq:nondeg-34}
I_4:=I(J_\eta^0)\ge\frac{\eps}{24k}>0.
\end{equation}
By the definition of the collection $A$ and definitions
\eqref{eq:nondeg-13}--\eqref{eq:nondeg-14}
and \eqref{eq:nondeg-24}--\eqref{eq:nondeg-25},
we have
\begin{equation}\label{eq:nondeg-35}
I_5
:=
I\left(
\operatorname{int}(S_\eta)\setminus\left(\bigcup_{i=1}^n (a_i,b_i)\right)
\right)
=
\inf_{x\in 
\operatorname{int}(S_\eta)\setminus\left(\bigcup_{i=1}^n (a_i,b_i)\right)}
\big(|F(x)|+|G(x)|\big) \ge \rho >0
\end{equation}
(see \eqref{eq:nondeg-4} and \eqref{eq:nondeg-5})
and, in view of \eqref{eq:nondeg-6}, we see that
\begin{equation}\label{eq:nondeg-36}
I_6 
:=
I\left(\bigcup_{i=1}^n (a_i,b_i)\right)
\ge 
\min_{1\le i\le n}\inf_{x\in(a_i,b_i)}\big(|f(x)|+|g(x)|\big)
\ge 
\min_{1\le i\le n}(c_i+d_i)\ge\frac{\eps}{12n}>0.
\end{equation}
It follows from \eqref{eq:nondeg-23} and
\eqref{eq:nondeg-31}--\eqref{eq:nondeg-36} that
\[
I([0,1])\ge\min_{1\le j\le 6} I_j>0.
\]
Thus, functions $F_1,G_1\in BV_p[0,1]$ are jointly nondegenerate. 
Combining this observation with \eqref{eq:nondeg-27}
and \eqref{eq:nondeg-29}--\eqref{eq:nondeg-30},
we arrive at the conclusion of the theorem.
\qed
\end{proof}
\section{Proof of the main result and final remarks}
\label{sec:proof-conjecture}
\subsection*{Proof of Theorem~\ref{th:main}}
Take an arbitrary pair $(F,G)\in (BV_p[0,1])^2$. Fix $\eps>0$. It follows
from Theorem~\ref{th:nondegenerated} that there exists a 
pair of jointly nondegenerate functions $(F_1,G_1)\in(BV_p[0,1])^2$ 
such that
\begin{equation}\label{eq:proof-main-1}
F\cdot G=F_1\cdot G_1
\end{equation}
and
\begin{equation}\label{eq:proof-main-2}
\|F-F_1\|_{BV_p}<\eps/2,
\quad
\|G-G_1\|_{BV_p}<\eps/2.
\end{equation}
By Corollary~\ref{co:local-BVp}, there exists a $\delta>0$ such that
\begin{equation}\label{eq:proof-main-3}
B_{BV_p[0,1]}(F_1\cdot G_1,\delta)
\subset 
B_{BV_p[0,1]}(F_1,\eps/2)
\cdot 
B_{BV_p[0,1]}(G_1,\eps/2) .
\end{equation}
Combining \eqref{eq:proof-main-1}--\eqref{eq:proof-main-3}, we arrive at
the following:
\[
B_{BV_p[0,1]}(F\cdot G,\delta)
\subset 
B_{BV_p[0,1]}(F_1,\eps/2)
\cdot 
B_{BV_p[0,1]}(G_1,\eps/2)
\subset
B_{BV_p[0,1]}(F,\eps)
\cdot 
B_{BV_p[0,1]}(G,\eps).
\]
Thus, the multiplication in the Banach algebra $BV_p[0,1]$  is locally open
at the pair $(F,G)$. Since $(F,G)\in (BV_p[0,1])^2$ is an arbitrary pair,
we conclude that the multiplication in $BV_p[0,1]$ is an open bilinear mapping.
\qed

Let $1\le p<\infty$ and $\Lambda_p BV[0,1]$ be the Banach algebra of 
all functions of bounded variation in the Shiba-Waterman sense.
We conclude the paper with the following.
\begin{conjecture}
The multiplication in the Banach algebra $\Lambda_p BV[0,1]$ is an open 
bilinear mapping.
\end{conjecture}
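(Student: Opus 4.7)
The plan is to mirror the three-step architecture used in the proof of Theorem~\ref{th:main}. Local openness at jointly nondegenerate pairs in $(\Lambda_p BV[0,1])^2$ is already in hand (Corollary~\ref{co:local-Shiba-Waterman}), so what remains is to prove Shiba-Waterman analogues of the key lemma (Lemma~\ref{le:Shar}) and of the approximation theorem (Theorem~\ref{th:nondegenerated}). Once both are available, the argument given in Section~\ref{sec:proof-conjecture} transports verbatim, with $BV_p[0,1]$ replaced by $\Lambda_p BV[0,1]$ throughout, and yields the conjecture.

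For the key lemma I would first reduce to the case of real functions valued in $[0,1]$ via the decomposition $f = M(u_+ - u_- + i(v_+ - v_-))$ exactly as in the proof of Lemma~\ref{le:Shar}, using that $|a_+ - b_+| \le |a - b|$ gives $\operatorname{Vap}_{\Lambda_p}(w_l,[0,1]) \le \operatorname{Vap}_{\Lambda_p}(f_0,[0,1])$ for each nonnegative part $w_l$. The crucial ingredient is then the analogue of Lemma~\ref{le:Shar-4}: given $f\colon[0,1]\to[0,1]$ in $\Lambda_p BV[0,1]$ and $\eps > 0$, produce $\eta > 0$ such that every finite family $\{I_j\}_{j=1}^m$ of nonoverlapping intervals whose endpoints all lie in $f^{-1}([0,\eta))$ satisfies $\sum_j |f(I_j)|^p/\lambda_j < \eps^p$. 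I would fix a near-maximizing family $\{J_k\}_{k=1}^N$ realizing at least $\operatorname{Vap}_{\Lambda_p}(f,[0,1]) - \eps^p/2$, surgically insert each $I_j$ into the appropriate $J_k$, absorb the endpoint switches via Corollary~\ref{co:Shar}, and exploit $\lambda_i^{-1} \le \lambda_1^{-1}$ to choose $\eta$ small enough (depending on $N$, $p$, $\lambda_1$) to keep the total loss below $\eps^p/2$. The approximation theorem analogue then follows the scheme of Theorem~\ref{th:nondegenerated} line by line: finiteness of $A_0$ uses the divergence $\sum_i \lambda_i^{-1} = +\infty$ in place of the corresponding $BV_p$ estimate, and the variation bounds for $\|F - f\|_{\Lambda_p BV}$ and $\|G - g\|_{\Lambda_p BV}$ carry over since inequality~\eqref{eq:nondeg-9} is purely algebraic and $\operatorname{Vap}_{\Lambda_p}$ is superadditive on disjoint open intervals.

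The main obstacle I anticipate is the rearrangement implicit in $\operatorname{Vap}_{\Lambda_p}$. In $BV_p$ all weights equal $1$, so refining a partition by adding a point only contributes a nonnegative term, with no further bookkeeping; in $\Lambda_p BV[0,1]$, inserting new intervals into a near-optimal family can change which weight $\lambda_i$ is paired with which value of $|f(I_j)|^p$. I would sidestep this by exploiting the standard observation (cf.~\cite{HLP11,K11}) that the supremum defining $\operatorname{Vap}_{\Lambda_p}$ is attained when larger values of $|f(I)|^p$ are matched with smaller weights $\lambda_i$, so any additional terms produced by insertions can be controlled by $\lambda_1^{-1}$. Once this technical point is settled, both analogues should go through with primarily notational changes, and the conjecture then follows from Corollary~\ref{co:local-Shiba-Waterman} via the same argument as in the proof of Theorem~\ref{th:main}.
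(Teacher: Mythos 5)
The statement you are addressing is left as an \emph{open conjecture} in the paper: the authors prove local openness at jointly nondegenerate pairs for $\Lambda_p BV[0,1]$ (Corollary~\ref{co:local-Shiba-Waterman}) and explicitly remark that what remains --- and what they do not supply --- is the $\Lambda_p BV$ analogue of Theorem~\ref{th:nondegenerated}. Your proposal correctly identifies this same reduction, but it is a plan rather than a proof: neither the Shiba--Waterman analogue of Lemma~\ref{le:Shar-4} nor that of Theorem~\ref{th:nondegenerated} is actually established, and the step you defer to a "standard observation" is precisely the one that blocks the argument.

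Concretely, the difficulty is this. In the proof of Lemma~\ref{le:Shar-4} one inserts the points $x_j$ into a near-optimal partition $\{y_k\}$ and compares term by term: by Corollary~\ref{co:Shar} the subdivided pieces of $[y_k,y_{k+1}]$ still contribute at least $|f(y_{k+1})-f(y_k)|^p$ minus an error of order $\eta$, and every term carries weight $1$. In $\Lambda_p BV[0,1]$, after you surgically insert the intervals $I_j$ into a near-maximizing family $\{J_k\}_{k=1}^N$, the enlarged family has many more intervals, and in the optimal (rearranged) pairing the pieces of $J_k$ are generally matched with weights $1/\lambda_i$ for $i$ much larger than $k$. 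The loss $|f(J_k)|^p\bigl(1/\lambda_k-1/\lambda_i\bigr)$ incurred on the \emph{original} terms is not controlled by $\eta$ at all, so the comparison with $\operatorname{Vap}_{\Lambda_p}(f,[0,1])$ does not close; bounding the \emph{new} terms by $\lambda_1^{-1}$ goes in the wrong direction and does not address this. The same weight-reassignment issue reappears in the analogues of \eqref{eq:nondeg-16} and \eqref{eq:nondeg-21}, where the paper decomposes the variation of $F-f$ interval by interval; $\operatorname{Vap}_{\Lambda_p}$ does not split this way. So the claim that both analogues "go through with primarily notational changes" is unsubstantiated and is at odds with the authors' own assessment that the statement is open. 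To turn this into a proof you would need to actually prove the $\Lambda_p BV$ versions of Lemma~\ref{le:Shar-4} and Theorem~\ref{th:nondegenerated}, with the weight bookkeeping carried out in full.
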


In view of Corollary~\ref{co:local-Shiba-Waterman}, to confirm this conjecture,
one has to prove that every pair of functions $(f,g)\in(\Lambda_pBV[0,1])^2$
can be approximated by a pair of jointly nondegenerate functions 
$(f_1,g_1)\in(\Lambda_pBV[0,1])^2$ such that $f\cdot g=f_1\cdot g_1$.

\section*{Acknowledgments.} 
This work was partially supported by the Funda\c{c}\~ao para a Ci\^encia e a
Tecnologia (Portu\-guese Foundation for Science and Technology)
through the project
UIDB/MAT/00297/2020 (Centro de Matem\'atica e Aplica\c{c}\~oes).


\end{document}